\newcommand{\R}{\mathbb{R}}
\newcommand{\C}{\mathbb{C}}
\newcommand{\N}{\mathbb{N}}
\newcommand{\Z}{\mathbb{Z}}
\newcommand{\HH}{\mathcal{H}}
\newcommand{\D}{\mathcal{D}}
\newcommand{\We}{W_e}
\newcommand{\sige}{\sigma_e}
\newcommand{\tr}{\operatorname{tr}}
\newcommand{\re}{\operatorname{Re}}
\newcommand{\im}{\operatorname{Im}}
\newcommand{\conv}{\operatorname{conv}}
\newcommand{\inner}[1]{\left\langle #1 \right\rangle}
\newcommand{\cl}[1]{\overline{#1}}
\newcommand{\mf}[1]{\mathfrak{#1}}
\theoremstyle{plain}
\newtheorem{theorem}{Theorem}[section]
\newtheorem{corollary}[theorem]{Corollary}
\newtheorem{lemma}[theorem]{Lemma}
\newtheorem{proposition}[theorem]{Proposition}
\numberwithin{equation}{section}
\theoremstyle{definition}
\newtheorem{example}[theorem]{Example}
\theoremstyle{remark}
\newtheorem{remark}[theorem]{Remark}
\begin{document}
\title[The essential numerical range and a theorem of Simon]{The essential numerical range and a theorem of Simon on the absorption of eigenvalues}
\author[B. Lins]{Brian Lins}
\date{}
\address{Brian Lins, Hampden-Sydney College}
\email{blins@hsc.edu}

\subjclass[2010]{Primary 47A55; Secondary 47A07, 47A12}
\keywords{Analytic perturbation, self-adjoint operators, eigenvalues, essential spectrum, essential numerical range}

\begin{abstract}
Let $A(t)$ be a holomorphic family of self-adjoint operators of type (B) on a complex Hilbert space $\HH$. Kato-Rellich perturbation theory says that isolated eigenvalues of $A(t)$ will be analytic functions of $t$ as long as they remain below the minimum of the essential spectrum of $A(t)$. At a threshold value $t_0$ where one of these eigenvalue functions hits the essential spectrum, the corresponding point in the essential spectrum might or might not be an eigenvalue of $A(t_0)$. Our results generalize a theorem of Simon to give a sufficient condition for the minimum of the essential spectrum to be an eigenvalue of $A(t_0)$ based on the rate at which eigenvalues approach the essential spectrum. We also show that the rates at which the eigenvalues of $A(t)$ can approach the essential spectrum from below correspond to eigenvalues of a bounded self-adjoint operator. The key insight behind these results is the essential numerical range which was recently extended to unbounded operators by B\"{o}gli, Marletta, and Tretter. 
\end{abstract}

\maketitle

\section{Introduction}
Let $A(t)$ be a holomorphic family of self-adjoint operators on a complex Hilbert space $\HH$ with inner product $\inner{\cdot,\cdot}$. For bounded operators, this means that 
$$A(t) = A_0 + t A_1 + t^2 A_2 + \ldots$$
where each $A_k$ is a bounded self-adjoint operator on $\HH$, and there is an $r > 0$ such that the series converges absolutely in norm when $|t| < r$. If $\lambda_0$ is an isolated eigenvalue of $A(t_0)$ with finite multiplicity $m$, Kato-Rellich perturbation theory predicts that there is a holomorphic family of $m$ mutually orthogonal unit vectors $x_j(t) \in \HH$ such that each $x_j(t)$ is an eigenvector of $A(t)$ for $1 \le j \le m$ and $t$ in a neighborhood of $t_0$, and the corresponding eigenvalues $\lambda_j(t)$ satisfy $\lambda_j(t_0) = \lambda_0$ for each $1 \le j \le m$.  

This theory was first introduced by Rellich \cite{Rellich37} and later refined by Kato \cite{Kato} and Sz.-Nagy \cite{SzNagy46}. In addition to bounded self-adjoint families, the theory also applies to certain special families of unbounded self-adjoint operators, the so called type (A) and the more general type (B) holomorphic families \cite{Kato}. For both type (A) and type (B) families, the operators $A(t)$ are self-adjoint and bounded below. The main difference between the two types is that the domain of $A(t)$ is constant for type (A) families, but this is not assumed for type (B). Kato-Rellich theory applies to isolated eigenvalues with finite multiplicity in both cases, however.  

If $\lambda(t)$ is an element in the discrete spectrum of $A(t)$ that depends analytically on $t$ when $t > t_0$, but $\lambda(t)$ approaches an element of the essential spectrum of $A(t_0)$ as $t \rightarrow t_0^+$, then it is not possible in general to analytically continue $\lambda(t)$ beyond $t_0$, and in fact, $\lim_{t \rightarrow t_0^+} \lambda(t)$ may not even be an eigenvalue of $A(t_0)$.  Still, there are situations where it might be important to know the rate at which $\lambda(t)$ approaches the essential spectrum or to determine whether or not the limit of $\lambda(t)$ is an eigenvalue of $A(t_0)$. Such problems arise in mathematical physics when one considers perturbations of a Schrodinger operator $-\Delta + t V$ \cite{GesztesyHolden87,KlausSimon80,Rauch80} (see also \cite{Golovaty19} and the references therein).  They also arise when considering the boundary curves of the numerical range of a bounded operator \cite{LiSp20}.

One result, due to Simon \cite[Theorem 2.1]{Simon77} is the following.

\begin{theorem} \label{thm:Simon}
Let $A, B$ be self-adjoint operators on $\HH$ with $\D(|A|^{1/2}) \subseteq \D(|B|^{1/2})$. Suppose that $A \ge 0$, $0$ is in the essential spectrum of $A$, and $B$ is relative $A$-form compact, that is, $|B|^{1/2}(A+I)^{-1}|B|^{1/2}$ is compact. Suppose also that $A \dotplus tB$ has a largest negative eigenvalue $\mu(t)$ which is non-degenerate (i.e., has multiplicity one) for all $0 < t_0 < t < \epsilon$ for some $t_0$ and $\epsilon >0$. If $\mu(t)$ converges to $0$ as $t \rightarrow t_0^+$, and no other eigenvalue of $A \dotplus tB$ converges to $0$, then either 
\begin{enumerate}
\item $\lim_{t \rightarrow t_0^+} \mu(t)/(t-t_0) = 0$, or 
\item $0$ is an eigenvalue of $A \dotplus t_0B$. 
\end{enumerate} 
In the later case, suppose that $0$ is not an eigenvalue of $A$. Then $0$ is a simple eigenvalue of $A \dotplus t_0 B$ and 
$$\lim_{t \rightarrow t_0^+} \mu(t)/(t-t_0) = \inner{B\eta, \eta}$$
where $\eta$ obeys $(A \dotplus t_0 B)\eta = 0, \|\eta\| = 1$. (In particular, if $0$ is not an eigenvalue of $A$, then $\lim_{t \rightarrow t_0^+} \mu(t)/(t-t_0) \ne 0$ if and only if $0$ is an eigenvalue of $A \dotplus t_0 B$.)  
\end{theorem}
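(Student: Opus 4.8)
\emph{Plan.} The whole statement is really about the behaviour of the scalar function $s\mapsto b[\eta(s)]$ as $s\downarrow t_0$, where $a,b$ denote the quadratic forms of $A$ and $B$ and $\eta(s)$ is a normalized eigenvector for $\mu(s)$. First I would record the reduction. Since $B$ is relatively $A$-form compact, $\psi\mapsto a[\psi]+s\,b[\psi]$ is a holomorphic family of forms, hence of operators of type (B), so on $(t_0,\epsilon)$ the simple isolated eigenvalue $\mu(s)$ carries a real-analytic normalized eigenvector $\eta(s)$ with $\mu'(s)=b[\eta(s)]$ (Feynman--Hellmann) and $\mu(s)=a[\eta(s)]+s\,b[\eta(s)]$, where $a[\eta(s)]\ge 0$ because $A\ge 0$. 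Relative form compactness forces relative form bound $0$, i.e.\ $|b[\psi]|\le\tfrac12 a[\psi]+C\|\psi\|^2$; feeding this into $a[\eta(s)]=\mu(s)-s\,b[\eta(s)]$ and using $\mu(s)\to 0$ shows $a[\eta(s)]$ and $b[\eta(s)]=\mu'(s)$ stay bounded near $t_0$. Hence $\mu$ is Lipschitz up to $t_0$, $\mu(t)=\int_{t_0}^t b[\eta(s)]\,ds$, and $\mu(t)/(t-t_0)$ is the average of $b[\eta(\cdot)]$ over $[t_0,t]$; so everything reduces to understanding $\lim_{s\downarrow t_0}b[\eta(s)]$.

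Next, the dichotomy. The bound above makes $\{\eta(s)\}$ bounded in the form Hilbert space $Q(A)=\D(|A|^{1/2})$; given $s_n\downarrow t_0$, pass to a subsequence with $\eta(s_n)\rightharpoonup\eta$ in $Q(A)$. Compactness of $|B|^{1/2}(A+I)^{-1/2}$ (which follows from the hypothesis) gives $|B|^{1/2}\eta(s_n)\to|B|^{1/2}\eta$ in norm; letting $n\to\infty$ in $a(\eta(s_n),\varphi)+s_n b(\eta(s_n),\varphi)=\mu(s_n)\inner{\eta(s_n),\varphi}$ yields $a(\eta,\varphi)+t_0 b(\eta,\varphi)=0$ for all $\varphi\in Q(A)$, i.e.\ $(A\dotplus t_0 B)\eta=0$, and also $b[\eta(s_n)]\to b[\eta]$. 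If $0$ is not an eigenvalue of $A\dotplus t_0 B$, every such $\eta$ is $0$, so $b[\eta(s)]\to 0$ and $\mu(t)/(t-t_0)\to 0$: alternative (1). Otherwise, if $\mu(t)/(t-t_0)\not\to 0$ then $b[\eta(s_n)]\not\to 0$ along some sequence, the corresponding weak limit $\eta$ is nonzero, and $0$ is an eigenvalue of $A\dotplus t_0 B$: alternative (2). This settles the first assertion.

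For the refined statement, assume $0$ is an eigenvalue of $A\dotplus t_0 B$ but not of $A$; put $K=\ker(A\dotplus t_0 B)$ and let $\nu_1\le\cdots\le\nu_p<0$ be the negative eigenvalues of $A\dotplus t_0 B$ (note $\inf\sigma_{\mathrm{ess}}(A\dotplus t B)=\inf\sigma_{\mathrm{ess}}(A)=0$, since $\sigma_{\mathrm{ess}}$ is stable under relatively form-compact perturbations). For $\psi\in K$ we have $a[\psi]=-t_0 b[\psi]$ and $a[\psi]>0$ (else $A\psi=0$), so $b[\psi]<0$. Using $(A\dotplus t_0 B)\psi=0$ to kill the cross terms involving $K$, the span of $K$ together with the negative eigenspaces of $A\dotplus t_0 B$ is a trial space for which min--max gives $\mu_{\dim K+p}(A\dotplus t B)\le -c(t-t_0)+O((t-t_0)^2)<0$ for $t\downarrow t_0$, with $c>0$; comparing $A\dotplus tB$ and $A\dotplus t_0B$ on the span of the lowest eigenvectors of $A\dotplus tB$ gives the matching lower bound $\mu_k(A\dotplus t B)\ge\mu_k(A\dotplus t_0 B)-C(t-t_0)$. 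Since $\mu_k(A\dotplus t_0 B)=0$ for $k>p$ (the operator has exactly $p$ eigenvalues below $0=\inf\sigma_{\mathrm{ess}}$), this pins down $\dim K$ eigenvalues of $A\dotplus tB$ as $\Theta(t-t_0)$, so each of them tends to $0$. If $\dim K\ge 2$, at least one of them is strictly below $\mu(t)$ — or else $\mu(t)$ is degenerate — and in either case we contradict a hypothesis; hence $0$ is simple, with normalized eigenvector $\eta$, and $b[\eta]=-a[\eta]/t_0<0$.

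Finally, the rate formula. By the second paragraph every subsequential weak limit of $\eta(s)$ lies in $K=\C\eta$, say it is $\lambda\eta$ with $|\lambda|\le1$, and $b[\eta(s_n)]\to|\lambda|^2 b[\eta]$; the content of the formula is that $|\lambda|=1$ always, i.e.\ $\eta(s)\to\eta$ strongly (no mass of $\eta(s)$ escapes into the part of the spectrum of $A$ clustering at $0$, which it is \emph{a priori} free to do precisely because $0\notin\sigma_p(A)$). This is the resonance-versus-bound-state alternative, and here I would argue through the Birman--Schwinger operator: writing $B=|B|^{1/2}S|B|^{1/2}$ with $S=S^*$, the operator $\mathcal B(\mu):=S|B|^{1/2}(A-\mu)^{-1}|B|^{1/2}S$ is, for $\mu<0$, compact and self-adjoint, $\mu$ is an eigenvalue of $A\dotplus tB$ exactly when $-1/t\in\sigma_p(\mathcal B(\mu))$, and a corresponding eigenvector there is $S|B|^{1/2}\eta(t)$. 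The hard step — the main obstacle in the whole argument — is to show that $\mathcal B(\mu)$ converges \emph{in operator norm} to a compact limit $\mathcal B(0)$ as $\mu\to 0^-$: the singular factor $(A-\mu)^{-1}$ near the bottom of $\sigma(A)$ must be tamed by sandwiching with the compact $|B|^{1/2}(A+I)^{-1/2}$ and by monotone/dominated convergence in the spectral calculus of $A$, where the absence of an eigenvalue of $A$ at $0$ is exactly what prevents a point mass from obstructing the limit. Granting this, $-1/t_0$ is an eigenvalue of $\mathcal B(0)$, simple by the third paragraph together with non-degeneracy of $\mu(t)$ and the absence of other eigenvalues near $0$; running Birman--Schwinger backwards produces an $\HH$-eigenvector of $A\dotplus t_0 B$ at $0$ and forces $S|B|^{1/2}\eta(t)$, hence — through $\eta(t)=-t(A-\mu(t))^{-1}B\eta(t)$ — also $\eta(t)$ itself, to converge in norm to $\eta$ up to a phase. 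Then $b[\eta(s)]\to b[\eta]=\inner{B\eta,\eta}$, so $\lim_{t\to t_0^+}\mu(t)/(t-t_0)=\inner{B\eta,\eta}$; the final parenthetical assertion follows because this limit equals $b[\eta]<0\ne 0$ exactly when $0$ is an eigenvalue of $A\dotplus t_0 B$ (given $0\notin\sigma_p(A)$).
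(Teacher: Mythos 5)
Your first three paragraphs are sound and, in substance, follow the same route the paper takes (the paper does not prove Theorem \ref{thm:Simon} from scratch but derives it from Corollary \ref{cor:extendedWeakerMain} and Theorem \ref{thm:main} via Lemma \ref{lem:relcompact}; the underlying mechanics there are exactly your weak limits of $\eta(s)$ in the form norm, compactness upgrading $\mathfrak{b}$-convergence, $\mu'(s)=\mathfrak{b}[\eta(s)]$, and min--max for simplicity). The problem is your fourth paragraph. The step you yourself flag as the main obstacle --- norm convergence of $\mathcal{B}(\mu)=S|B|^{1/2}(A-\mu)^{-1}|B|^{1/2}S$ as $\mu\to 0^-$ --- is not only left unproved, it is false in general under the stated hypotheses. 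Writing $\mathcal{B}(\mu)=SC\,f_\mu(A)\,C^*S$ with $C=|B|^{1/2}(A+I)^{-1/2}$ compact and $f_\mu(\lambda)=(\lambda+1)/(\lambda-\mu)$, the middle factor increases monotonically to the unbounded function $(\lambda+1)/\lambda$, and sandwiching by a compact does not tame this: for weakly coupled Schr\"odinger operators in one or two dimensions (which satisfy relative form compactness) one has $\|\mathcal{B}(\mu)\|\sim|\mu|^{-1/2}$, resp.\ $\log(1/|\mu|)$. The same divergence infects your final step $\eta(t)=-t(A-\mu(t))^{-1}B\eta(t)$, so norm convergence of the Birman--Schwinger eigenvectors would not transfer to $\eta(t)$ anyway. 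Whether $0$ happens to be an eigenvalue of $A\dotplus t_0B$ does not repair this, since the divergence is a property of $A,B$ near the bottom of $\sige(A)$.

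The gap is, however, closable with ingredients you already have, and this is essentially how the paper argues. You do not need strong convergence of $\eta(s)$ as an \emph{input} to the rate formula. From your paragraphs 2 and 4, every subsequential limit of $\mathfrak{b}[\eta(s)]=\mu'(s)$ equals $\mathfrak{b}[\tilde\eta]=|\lambda|^2\inner{B\eta,\eta}$ for a weak limit $\tilde\eta=\lambda\eta$ with $|\lambda|\le 1$, and since $\inner{B\eta,\eta}<0$ this gives $\liminf_{s\to t_0^+}\mu'(s)\ge\inner{B\eta,\eta}$; averaging (your paragraph 1) then yields $\liminf\mu(t)/(t-t_0)\ge\inner{B\eta,\eta}$. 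On the other hand, min--max with $\eta$ as trial vector (the cross terms vanish because $(A\dotplus t_0B)\eta=0$) gives $\limsup\mu(t)/(t-t_0)\le\inner{B\eta,\eta}+O(t-t_0)$. The squeeze proves the formula, and strong convergence $\eta(s)\to\eta$ then follows as a \emph{corollary} ($|\lambda|=1$ is forced). In the paper this squeeze appears in Theorem \ref{thm:main}: the lower bound is the statement $z\ge\omega$ with $z\in\We(\mathfrak{a}_1)$ and $\omega=0$ (via Lemma \ref{lem:relcompact}, $\We(\mathfrak{a}+i\mathfrak{b})\subseteq[0,\infty)$), and the upper bound is the derivative estimate \eqref{eq:lowerbound} combined with the trial-vector bound. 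So: replace your Birman--Schwinger paragraph by this squeeze and the proof is complete; as written, it has a genuine hole at its central step.
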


Note that the operator sum $A+tB$ may not be self-adjoint if $B$ is not bounded, so the expression $A\dotplus tB$ in the theorem above is meant to denote a self-adjoint extension of the operator sum of $A$ and $tB$. Such an extension is always possible, and in fact forms a self-adjoint holomorphic family of type (B). In this paper, we generalize Theorem \ref{thm:Simon} to any self-adjoint holomorphic family of type (B), denoted $A(t)$. Our results describe the rates at which the eigenvalues of $A(t)$ approach the minimum of the essential spectrum of $A(t_0)$ from below as $t \rightarrow t_0^+$. Specifically, we prove that there is a bounded self-adjoint operator $B_0$ such that if $\lambda(t)$ is an eigenvalue of $A(t)$ below the essential spectrum that depends continuously on $t$ when $t_0 < t < \epsilon$, if $\Sigma(t)$ is the minimum of the essential spectrum of $A(t)$ for any $t$, and if $\lim_{t \rightarrow t_0^+} \lambda(t) = \Sigma(t_0)$, then  $\lim_{t \rightarrow t_0^+}(\lambda(t) - \Sigma(t_0))/(t-t_0)$ is an eigenvalue of $B_0$. We also give a sufficient condition for the minimum of the essential spectrum to be an eigenvalue of $A(t_0)$ at a threshold value $t_0$ where eigenvalues from the discrete spectrum are absorbed by the essential spectrum. Our results also generalize some other previously known results, see \cite[Theorem 1.1]{KlausSimon80} and \cite[Theorem 3.1]{Simon77}.  

In order to prove these results, we use the essential numerical range of an auxilliary sesquilinear form determined by the type (B) family $A(t)$. The essential numerical range was introduced by Stampfli and Williams for bounded operators in \cite{StWi68}.  Recently, B\"{o}gli, Marletta, and Tretter extended the notion of essential numerical range to unbounded operators and sesquilinear forms \cite{BoMaTr19}. They also apply it to the problem of spectral pollution.  Their extension of the essential numerical range to unbounded operators is what allows us to generalize Theorem \ref{thm:Simon}. 

The paper is organized as follows. In Section \ref{sec:We}, we introduce the essential numerical range for operators and for sesquilinear forms. We also prove that the essential numerical range of a densely defined unbounded operator is either empty or unbounded, which appears to be a new result. In Section \ref{sec:perturb}, we review type (B) holomorphic families of self-adjoint operators and their properties.  Our main results are detailed in Section \ref{sec:main}.  Section \ref{sec:isolated} contains a minor observation about analytic perturbations of isolated eigenvalues with infinite multiplicity, and we conclude with examples in Section \ref{sec:ex}.

Throughout the paper, we will use the following notation.  $\HH$ will denote a complex Hilbert space.  All operators are assumed to act on $\HH$, unless stated otherwise.  For an operator $T$, the domain of $T$ will be denoted $\D(T)$ and the spectrum of $T$ is $\sigma(T)$.  If $x_n \in \HH$ converges to $x$ in norm, we write $x \rightarrow x$ and if $x_n$ converges weakly to $x$, then we write $x_n \xrightarrow{w} 0$.  For a sesquilinear form $\mf{t}$ on $\HH$, we write $\mf{t}[x]$ as shorthand for $t[x,x]$.  We denote the closure of a set $A$ by $\cl{A}$ and the convex hull by $\conv A$.  
\color{black}

\section{The Essential Numerical Range} \label{sec:We}
For an operator $T$ with domain $\mathcal{D}(T) \subseteq \mathcal{H}$, the \emph{numerical range} of $T$ is the set 
$$W(T) = \{ \inner{Tx,x} \, : \, x \in \D(T) \text{ with } \|x\|=1\}.$$
According to the Toeplitz-Hausdorff theorem, the numerical range of an operator is always a convex set. The \emph{essential numerical range} of $T$ is the set
$$\We(T) = \{\lambda \in \C : \exists \, x_n \in \mathcal{D}(T) \text{ with } \|x_n\|=1, x_n \xrightarrow{w} 0, \inner{Tx_n,x_n} \rightarrow \lambda \}.$$
There are several alternative definitions of essential spectrum, but we will follow \cite{BoMaTr19} and use the following:
$$\sige(T) = \{\lambda \in \C : \exists \, x_n \in \mathcal{D}(T) \text{ with } \|x_n\|=1, x_n \xrightarrow{w} 0, \|(T-\lambda)x_n\| \rightarrow 0 \}.$$

The following lemma collects some of the basic properties of the essential numerical range. The first two assertions are \cite[Proposition 2.2]{BoMaTr19}, while the third follows immediately from the definition of $\We(T)$. 
\begin{lemma} \label{lem:basics}
Let $T$ be an operator (unbounded or bounded) on $\HH$. Then 
\begin{enumerate}
\item $\We(T)$ is closed and convex.
\item $\sige(T) \subseteq \We(T)$
\item $\We(cT) = c\We(T)$ for all $c \in \C$.
\end{enumerate}
\end{lemma}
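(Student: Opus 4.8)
The plan is to establish the three assertions one at a time, with essentially all of the work going into the closedness and convexity of $\We(T)$ in item (1) (this is \cite[Proposition 2.2]{BoMaTr19}); items (2) and (3) are one-liners, as the paper already indicates for (3).

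For (3): when $c \ne 0$ one has $\D(cT) = \D(T)$ and $\inner{(cT)x,x} = c\inner{Tx,x}$, so a normalized weakly null sequence in $\D(T)$ realizes $\lambda \in \We(T)$ exactly when it realizes $c\lambda \in \We(cT)$; hence $c\We(T) \subseteq \We(cT)$, and feeding this back with $c^{-1}$ and $cT$ in place of $c$ and $T$ gives equality. The case $c = 0$ is immediate (both sides equal $\{0\}$ whenever $\We(T) \ne \emptyset$). For (2): if $\lambda \in \sige(T)$ is witnessed by $(x_n)$, Cauchy--Schwarz gives $|\inner{Tx_n,x_n} - \lambda| = |\inner{(T-\lambda)x_n,x_n}| \le \|(T-\lambda)x_n\| \to 0$, so the very same sequence witnesses $\lambda \in \We(T)$.

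For the closedness in (1), I would first record a finite-approximation reformulation: $\lambda \in \We(T)$ if and only if for every finite $F \subseteq \HH$ and every $\varepsilon > 0$ there is a unit vector $x \in \D(T)$ with $\max_{f \in F}|\inner{x,f}| < \varepsilon$ and $|\inner{Tx,x} - \lambda| < \varepsilon$. The forward implication is clear from the definition of $\We(T)$; conversely, applying the condition to an increasing family of finite sets $F_k$ (enlarging $F_k$ so as to include all previously produced vectors) together with tolerances $\varepsilon_k \downarrow 0$ yields unit vectors $x_k \in \D(T)$ whose inner products against $\spn\{x_1, x_2, \dots\}$ tend to $0$; since $\|x_k\| = 1$ this forces $x_k \xrightarrow{w} 0$, while $\inner{Tx_k,x_k} \to \lambda$. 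Granting this, closedness is routine: if $\lambda_j \to \lambda$ with each $\lambda_j \in \We(T)$, then given $F$ and $\varepsilon$ pick $j$ with $|\lambda_j - \lambda| < \varepsilon/2$ and apply the reformulation to $\lambda_j$ with tolerance $\varepsilon/2$.

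For convexity I would exhibit $\We(T)$ as an intersection of closed convex sets. For each finite-dimensional $\mathcal{L} \subseteq \D(T) \cap \D(T^*)$ set $W_{\mathcal L}(T) = \{\inner{Tx,x} : x \in \D(T)\cap\mathcal{L}^\perp,\ \|x\| = 1\}$, which is convex by the Toeplitz--Hausdorff theorem, hence so is its closure; the target is $\We(T) = \bigcap_{\mathcal L}\cl{W_{\mathcal L}(T)}$, which then yields both claims at once. The inclusion ``$\supseteq$'' is a diagonal extraction along an increasing chain of such $\mathcal{L}$'s, and ``$\subseteq$'' is the step I expect to be the main obstacle: given $\lambda \in \We(T)$ witnessed by $(x_n)$ and $\mathcal{L}$ with orthonormal basis $g_1, \dots, g_k$, replace $x_n$ by its normalized projection onto $\mathcal{L}^\perp$ and check that $\inner{Tx_n,x_n}$ changes only by $o(1)$. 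With $\eta_n = \sum_i \inner{x_n,g_i}g_i \to 0$ and $\delta_n = \sum_i \inner{x_n,g_i}Tg_i \to 0$, the terms $\inner{\delta_n,x_n}$ and $\inner{\delta_n,\eta_n}$ are harmless, but $\inner{Tx_n,\eta_n} = \sum_i \overline{\inner{x_n,g_i}}\,\inner{Tx_n,g_i}$ is delicate because $T$ is unbounded and $\|Tx_n\|$ may blow up -- and this is precisely where $g_i \in \D(T^*)$ enters, since then $\inner{Tx_n,g_i} = \inner{x_n,T^*g_i} \to 0$. (In the symmetric case, which is all that the applications in this paper require, the representation can be bypassed: by closedness it suffices to show $\tfrac12(\lambda+\mu) \in \We(T)$, and for symmetric $T$ the two cross terms in $\inner{T(u_j + v_j), u_j + v_j}$ are complex conjugates of each other, so a single extraction $v_j = y_{m_j}$ with $m_j$ taken so large that $\inner{Tu_j,v_j}$ and $\inner{u_j,v_j}$ are tiny does the job.) I would then combine the intersection representation with the closedness already established to conclude (1), deferring to \cite{BoMaTr19} for the extra care needed when $\D(T)\cap\D(T^*)$ fails to be dense.
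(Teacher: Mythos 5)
The paper itself does not prove this lemma: parts (1) and (2) are quoted from \cite[Proposition 2.2]{BoMaTr19} and part (3) is declared immediate, so any actual argument is by definition a different route. Your treatment of (2), (3) and of closedness is correct. The Cauchy--Schwarz estimate settles (2); the finite-approximation reformulation is the standard characterization, and the diagonal extraction does force $x_k \xrightarrow{w} 0$ (each $x_k$ lies in the closed span $M$ of $\{x_1,x_2,\dots\}$, the inner products against a dense subset of $M$ tend to zero, and the pairing with $M^\perp$ vanishes identically). One pedantic remark on (3): for $c=0$ the identity as literally stated can fail, since $\We(0\cdot T)=\{0\}$ whenever $\D(T)$ admits a normalized weakly null sequence, while $0\cdot\We(T)=\emptyset$ if $\We(T)=\emptyset$ (which can happen, cf.\ \cite[Example 2.6]{BoMaTr19}); the paper only ever uses $c\ne 0$, so this is a quibble with the statement rather than with you.

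The genuine problem is convexity. The representation $\We(T)=\bigcap_{\mathcal{L}}\cl{W_{\mathcal{L}}(T)}$, with $\mathcal{L}$ ranging over finite-dimensional subspaces of $\D(T)\cap\D(T^*)$, is simply false for a general operator: the inclusion ``$\supseteq$'' needs $\D(T)\cap\D(T^*)$ to be dense (in the extreme case $\D(T)\cap\D(T^*)=\{0\}$ the right-hand side is all of $\cl{W(T)}$), so deferring to the reference at exactly that point leaves the general statement unproved, and the symmetric-case bypass does not cover it. Moreover, the cross-term difficulty you identify as ``the main obstacle'' disappears entirely if you argue directly on two-dimensional compressions, which is in the spirit of the paper's own Theorem \ref{thm:cap}: given $\lambda,\mu\in\We(T)$ with witnessing sequences $(x_n)$ and $(y_m)$ and $s\in(0,1)$, choose $m_n\to\infty$ with $|\inner{x_n,y_{m_n}}|\le\tfrac12$. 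The set $\{\inner{Tw,w}: w\in\spn\{x_n,y_{m_n}\},\ \|w\|=1\}$ is convex by Toeplitz--Hausdorff applied to the compression of the quadratic form to this two-dimensional subspace of $\D(T)$, and it contains $\inner{Tx_n,x_n}$ and $\inner{Ty_{m_n},y_{m_n}}$, hence some unit vector $w_n=\alpha_nx_n+\beta_ny_{m_n}$ with $\inner{Tw_n,w_n}=s\inner{Tx_n,x_n}+(1-s)\inner{Ty_{m_n},y_{m_n}}\to s\lambda+(1-s)\mu$. The near-orthogonality forces $|\alpha_n|^2+|\beta_n|^2\le 2$, whence $w_n\xrightarrow{w}0$. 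No estimate on $\inner{Tx_n,y_{m_n}}$ or $\inner{Ty_{m_n},x_n}$, no adjoint domain, and no symmetry are needed; this closes the gap in full generality and is essentially the argument of \cite{BoMaTr19}. Your intersection formula is then better regarded as a corollary valid under the extra density hypothesis, not as the engine of the proof.
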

If $A$ is a self-adjoint operator on $\HH$ that is bounded below, then $\min \sige(A) = \min \We(A)$ \cite[Theorem 3.8]{BoMaTr19}.

The following result shows that pre-images of points in the numerical range have a kind of continuity property. We need it to prove an important fact about the essential numerical range. A weaker version appeared as \cite[Theorem 2]{CoJoKiLiSp13} for matrices in $\C^{n \times n}$ and for bounded linear operators as \cite[Proposition 3.2]{LiSp20}.
\begin{theorem} \label{thm:cap}
Let $T$ be an operator on $\HH$.  Let $z = \inner{Tx,x}$ for some $x \in \D(T)$ with $\|x\|=1$. For any $0 < \epsilon < 1$, the set 
$$\{\inner{Ty,y} \,:\, y \in \D(T), ~ \|y\|=1, \text{ and } |\inner{x,y}|^2 \ge 1-\epsilon\}$$
contains $\epsilon W(T) + (1-\epsilon) z$.  
\end{theorem}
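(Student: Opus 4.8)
The plan is to reduce Theorem~\ref{thm:cap} to a two–dimensional computation and then finish with a winding–number argument. First I would fix $0<\epsilon<1$ and a point $p=\epsilon w+(1-\epsilon)z$ with $w\in W(T)$, and write $w=\inner{Tu,u}$ for some unit $u\in\D(T)$. If $u$ is a scalar multiple of $x$ then $w=z$, $p=z$, and $y=x$ works; otherwise $x,u$ are linearly independent and I restrict attention to the two–dimensional subspace $N:=\spn\{x,u\}\subseteq\D(T)$. Normalizing so that $z=0$ (subtract the scalar $z$ from all values, i.e.\ replace $T|_N$ by $T|_N-zI$) and fixing a unit vector $\hat v\in N$ with $\hat v\perp x$, every unit vector of $N$ is, up to a phase, $y_c:=\sqrt{1-|c|^2}\,x+c\,\hat v$ for a unique $c$ in the closed unit disk, and $|\inner{x,y_c}|^2=1-|c|^2$; thus the condition $|\inner{x,y}|^2\ge 1-\epsilon$ becomes $|c|\le\sqrt\epsilon$, while a short computation gives
\[
\inner{Ty_c,y_c}=\Phi(c):=|c|^2\gamma+\sqrt{1-|c|^2}\,(c\alpha+\bar c\beta),\qquad \gamma=\inner{T\hat v,\hat v},\ \ \alpha=\inner{Tx,\hat v},\ \ \beta=\inner{T\hat v,x}.
\]
Since $w=\Phi(c_0)$ for the parameter $c_0$ corresponding to $u$, Theorem~\ref{thm:cap} reduces to: for every $c_0$ in the closed unit disk, the equation $\Phi(c)=\epsilon\,\Phi(c_0)$ has a solution with $|c|\le\sqrt\epsilon$.

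For the reduced problem I would first pick the right disk. If $c_0=0$ take $c=0$; otherwise write $c_0=\rho_0e^{i\psi_0}$ with $0<\rho_0\le 1$, set $r:=\sqrt\epsilon\,\rho_0\le\sqrt\epsilon$ and $\lambda:=\sqrt{\epsilon}\,\sqrt{1-\rho_0^2}\,/\,\sqrt{1-\epsilon\rho_0^2}$, and note that $0\le\lambda<1$ precisely because $\epsilon<1$. Putting $\tilde\omega(\psi):=e^{i\psi}\alpha+e^{-i\psi}\beta$ and $\tilde\omega_0:=\tilde\omega(\psi_0)$, the definitions yield the identity
\[
\epsilon\,\Phi(c_0)=r^2\gamma+\lambda\,r\sqrt{1-r^2}\,\tilde\omega_0,\qquad\text{while}\qquad \Phi\bigl(re^{i\psi}\bigr)=r^2\gamma+r\sqrt{1-r^2}\,\tilde\omega(\psi).
\]
Hence taking $|c|=r$ already matches the ``$\gamma$–part,'' and it remains to realize $\lambda\tilde\omega_0$ on the curve $\{\tilde\omega(\psi):\psi\in[0,2\pi)\}$, an ellipse centered at the origin which degenerates to a segment (or point) through the origin exactly when $|\alpha|=|\beta|$. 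In that degenerate case $\tilde\omega_0$ lies on a segment through $0$, and since $0\le\lambda<1$ so does $\lambda\tilde\omega_0$; this gives an explicit $c=re^{i\psi_*}$ solving $\Phi(c)=\epsilon\,\Phi(c_0)$.

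The step I expect to be the main obstacle is the generic case $|\alpha|\ne|\beta|$: no one–parameter ansatz for $c$ succeeds (taking $c$ a scalar multiple or rotation of $c_0$ always leaves an irreducible residual term, essentially because $\lambda\tilde\omega_0$ lies strictly inside, not on, the ellipse $\{\tilde\omega(\psi)\}$). I would resolve it with a degree/winding–number argument. Since $r<1$, $\Phi$ is continuous on the closed disk $\{\,|c|\le r\,\}$; the boundary loop $\psi\mapsto\Phi(re^{i\psi})$ traces an ellipse which, by the identity above together with $\lambda<1$, surrounds the target $\epsilon\,\Phi(c_0)$, so its winding number about $\epsilon\,\Phi(c_0)$ equals $\pm1\ne 0$. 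If $\epsilon\,\Phi(c_0)$ were not in $\Phi(\{\,|c|\le r\,\})$, then $\Phi-\epsilon\,\Phi(c_0)$ would be a nonvanishing continuous map on a disk, forcing its boundary winding number to be $0$ — a contradiction. Therefore $\Phi(c)=\epsilon\,\Phi(c_0)$ for some $|c|\le r\le\sqrt\epsilon$, the vector $y_c$ has $\|y_c\|=1$, $|\inner{x,y_c}|^2=1-|c|^2\ge 1-\epsilon$, and (reinstating $z$) $\inner{Ty_c,y_c}=\epsilon w+(1-\epsilon)z$. The two–dimensional reduction, the normalizations, and the degenerate case are routine bookkeeping; the real content is the winding argument.
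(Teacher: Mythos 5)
Your proof is correct, but it takes a genuinely different route from the paper's. The paper also reduces to a two--dimensional compression, but then invokes Davis's picture of the numerical range: $W(T)$ is the image of the sphere $S=\{yy^*\}$ of rank-one projections (sitting in the $3$-dimensional real affine space of trace-one Hermitian $2\times 2$ matrices) under the affine map $X\mapsto \tr(TX)$; the constraint $|\inner{x,y}|^2\ge 1-\epsilon$ cuts out a spherical cap $C$, the set $\epsilon S+(1-\epsilon)xx^*$ lies in $\conv C$, and the whole inclusion follows in one stroke from the cited fact that the image of a spherical cap under a linear map is convex. You instead parametrize the unit sphere of $\spn\{x,u\}$ by the closed disk, observe that the relevant circle $|c|=r$ maps onto an ellipse enclosing the target $\epsilon\Phi(c_0)$ (your identity relating $\lambda$, $r$, and $\tilde\omega_0$ checks out, and $\lambda<1$ is exactly where $\epsilon<1$ enters), and finish with a winding-number/degree argument on the disk, treating the degenerate ellipse separately. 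The trade-off: the paper's argument is shorter and yields the stronger structural fact that the image of the cap is convex, but it leans on an external lemma; yours is self-contained and elementary, essentially re-proving the needed surjectivity instance of that lemma, at the cost of the case split and the topological step. One cosmetic caution: your ``uniqueness'' of $c$ fails on $|c|=1$ (the phase of $y_c$ is not pinned down there), but since $\inner{Ty,y}$ and $|\inner{x,y}|$ are phase-invariant this does not affect the argument.
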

\begin{proof}
It suffices to prove the theorem for the compression of $T$ onto any two dimensional subspace of $\D(T)$ that contains $x$.  Therefore we will assume that $\HH = \C^2$. In that case, Davis \cite{Davis71} observed that the numerical range of $T$ is the image of the set $S = \{yy^* \, : \, y \in \C^2, ~\|y \|=1\}$ under the linear transformation $F:X \mapsto \tr(TX)$, and the set $S$ is a sphere contained in the 3-dimensional real affine space of $2$-by-$2$ Hermitian matrices with trace equal to one.   

The set of 2-by-2 Hermitian matrices has inner product $\inner{X,Y} = \tr(XY)$. For $x, y \in \C^2$, 
$$\inner{xx^*,yy^*} = \tr(xx^*yy^*) = x^*y \tr(xy^*) = | \inner{x,y} |^2.$$
Let $C = \{ yy^* \, : \, y \in \C^2, ~\|y \| = 1, \text{ and } | \inner{x,y} |^2 \ge 1 - \epsilon \}$. Thus $C$ is the spherical cap formed by intersecting $S$ with the half-space $H$ consisting of all 2-by-2 Hermitian matrices $Y$ such that $\inner{Y,xx^*} \ge 1-\epsilon$.  
The set $\epsilon S + (1-\epsilon) xx^*$ is contained in both the convex hull of $S$ and in the half-space $H$, therefore it is contained in the convex hull of $C$. It is not hard to show that the image of the spherical cap $C$ under the linear transformation $F$ is a convex set \cite[Lemma 3]{CoJoKiLiSp13}. Therefore 
\begin{align*}
\epsilon W(T) + (1-\epsilon) z &= F(\epsilon S + (1-\epsilon) xx^*) \subset F(\conv C) = F(C) \\
&= \{\inner{Ty,y} \,:\,  y \in \C^2, ~ \|y\|=1, \text{ and } |\inner{x,y}|^2 \ge 1-\epsilon\}.
\end{align*}
\end{proof}

Theorem \ref{thm:cap} lets us prove the following generalization of \cite[Proposition 2.4]{BoMaTr19}. 
\begin{theorem} \label{thm:rays}
Let $T$ be an unbounded operator on $\mathcal{H}$. Suppose $w \in \We(T)$ and $z_n$ is a sequence in $W(T)$ such that $|z_n| \rightarrow \infty$. If $\frac{z_n - w}{|z_n -w|}$ has a limit point $v$, then the ray $\{w + tv : t \ge 0 \}$ is contained in $\We(T)$.
\end{theorem}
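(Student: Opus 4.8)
The plan is to build, for each $t \ge 0$, a sequence in $\mathcal D(T)$ of unit vectors converging weakly to $0$ whose Rayleigh quotients converge to $w + tv$, thereby placing $w + tv$ in $\We(T)$. Fix $t > 0$ (the case $t = 0$ is the hypothesis $w \in \We(T)$). Since $w \in \We(T)$, choose $x_n \in \mathcal D(T)$ with $\|x_n\| = 1$, $x_n \xrightarrow{w} 0$, and $\inner{Tx_n, x_n} \to w$. Passing to a subsequence, we may assume $\frac{z_{k(n)} - w}{|z_{k(n)} - w|} \to v$ along some subsequence of the $z_n$'s; for notational ease relabel so that $z_n \in W(T)$, $|z_n| \to \infty$, and $\frac{z_n - w}{|z_n - w|} \to v$. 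Write $z_n = \inner{T\xi_n, \xi_n}$ for unit vectors $\xi_n \in \mathcal D(T)$.

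The key step is to apply Theorem \ref{thm:cap} with the roles of the two points played by $z_n$ (a point of $W(T)$) and some nearby vector, or more precisely to interpolate between a weakly-null vector giving something near $w$ and a vector giving something near $z_n$. Concretely, for a small parameter $\epsilon_n \in (0,1)$ to be chosen, apply Theorem \ref{thm:cap} with $x = x_m$ (so $z = \inner{Tx_m, x_m} \approx w$): the set of $\inner{Ty,y}$ with $\|y\| = 1$ and $|\inner{x_m, y}|^2 \ge 1 - \epsilon_n$ contains $\epsilon_n W(T) + (1-\epsilon_n)\inner{Tx_m,x_m}$, hence in particular contains $\epsilon_n z_n + (1 - \epsilon_n)\inner{Tx_m, x_m}$. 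Choosing $\epsilon_n = \frac{t}{|z_n - w|}$ (which $\to 0$ since $|z_n| \to \infty$, so it is eventually in $(0,1)$), we get $\epsilon_n z_n + (1-\epsilon_n) w = w + \epsilon_n(z_n - w) = w + t\frac{z_n - w}{|z_n - w|} \to w + tv$. So, picking for each $n$ an index $m = m(n)$ large enough that $\inner{Tx_{m}, x_{m}}$ is within $1/n$ of $w$ and such that $x_{m}$ has small enough inner product with the finitely many previously chosen vectors, we obtain unit vectors $y_n \in \mathcal D(T)$ with $|\inner{x_{m(n)}, y_n}|^2 \ge 1 - \epsilon_n$ and $\inner{Ty_n, y_n} \to w + tv$.

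It remains to arrange $y_n \xrightarrow{w} 0$. This is the step requiring care. Since $y_n$ is close to $x_{m(n)}$ in the sense that $|\inner{x_{m(n)}, y_n}|^2 \ge 1 - \epsilon_n \to 1$, a Cauchy–Schwarz estimate gives $\|y_n - \alpha_n x_{m(n)}\| \le \sqrt{\epsilon_n}$ for a suitable unimodular scalar $\alpha_n$ (the component of $y_n$ orthogonal to $x_{m(n)}$ has norm at most $\sqrt{\epsilon_n}$, and after adjusting phase the remaining error is controlled). Given any $h \in \HH$, $|\inner{y_n, h}| \le |\alpha_n|\,|\inner{x_{m(n)}, h}| + \sqrt{\epsilon_n}\|h\|$. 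By choosing $m(n) \to \infty$ fast enough (using $x_n \xrightarrow{w} 0$), we can ensure $|\inner{x_{m(n)}, h}| \to 0$ for $h$ in a countable dense set, and a standard $3\epsilon$-argument with the uniform bound $\|y_n\| = 1$ upgrades this to $\inner{y_n, h} \to 0$ for all $h \in \HH$, i.e. $y_n \xrightarrow{w} 0$. Then $y_n$ witnesses $w + tv \in \We(T)$, and since $\We(T)$ is closed (Lemma \ref{lem:basics}(1)) the whole ray $\{w + tv : t \ge 0\}$ lies in $\We(T)$.

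The main obstacle is the weak-null control in the third paragraph: Theorem \ref{thm:cap} only guarantees $y_n$ is \emph{geometrically} close to $x_{m(n)}$, and one must convert that closeness plus the weak nullity of the $x_n$ into weak nullity of $y_n$, which forces a diagonal choice of the indices $m(n)$ interleaved with the choice of $\epsilon_n \to 0$. The arithmetic identity $w + \epsilon_n(z_n - w) = w + t\frac{z_n-w}{|z_n-w|}$ when $\epsilon_n = t/|z_n-w|$ is the mechanism that turns the shrinking interpolation parameter into a fixed displacement $t$ along the limiting direction $v$.
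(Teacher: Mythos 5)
Your proposal is correct and follows essentially the same route as the paper: apply Theorem \ref{thm:cap} with interpolation parameter $\epsilon_n = t/|z_n - w|$ so that the fixed displacement $t$ along the limiting direction emerges from the shrinking cap, then use $|\inner{x, y_n}|^2 \ge 1-\epsilon_n$ to force $y_n$ into the weak-null class. The only cosmetic differences are that the paper centers the interpolation at $w_n = \inner{Tx_n,x_n}$ with the same index $n$ (avoiding your diagonal choice of $m(n)$) and gets weak nullity by adjusting phase so that $\|x_n - y_n\| \to 0$ in norm; your closing appeal to closedness of $\We(T)$ is unnecessary since each $w+tv$ is obtained directly.
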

\begin{proof}
Since $w \in \We(T)$, there is a sequence $x_n \in \D(T)$ with $\|x_n\| =1$, $\inner{Tx_n,x_n} \rightarrow w$ and $x_n \xrightarrow{w} 0$.  Let $w_n = \inner{Tx_n,x_n}$.  For any $t > 0$, let $\epsilon_n = t/|z_n - w_n|$. Then $\epsilon_n \rightarrow 0$ as $n \rightarrow \infty$. We can assume that $\epsilon_n < 1$ for all $n$. Observe that 
$$w_n + t \tfrac{z_n-w_n}{|z_n-w_n|} = w_n + \epsilon_n (z_n-w_n) \in \epsilon W(T) + (1-\epsilon)w_n$$ 
for all $n$.  By Theorem \ref{thm:cap}, there is a $y_n \in \D(T)$ with $\|y_n\| = 1$ such that $\inner{Ty_n,y_n} = w_n + t \frac{z_n-w_n}{|z_n-w_n|}$ and $|\inner{x_n,y_n}|^2 \ge 1-\epsilon_n$.  Since $\inner{T(\omega y_n),\omega y_n} = \inner{Ty_n,y_n}$ for all unimodular constants $\omega \in \C$, we may assume that $\inner{x_n, y_n} > 0$.  Then $\|x_n-y_n\|^2 \le 2 - 2 \sqrt{1-\epsilon_n}$. This means that $\|x_n -y_n\| \rightarrow 0$ and therefore $y_n \xrightarrow{w} 0$.  Then, since $\inner{Ty_n,y_n} = w_n + t \frac{z_n-w_n}{|z_n-w_n|}$ has a subsequence converging to  $w + tv$, we conclude that $w+tv \in \We(T)$.
\end{proof}

One consequence of Theorem \ref{thm:rays} is that if $\We(T)$ is nonempty and $W(T)$ is unbounded, then $\We(T)$ is also unbounded. It is possible, however, for $W(T)$ to be unbounded and $\We(T)$ to be empty, see \cite[Example 2.6]{BoMaTr19}. It is well known that the numerical range of a densely defined unbounded operator is unbounded. This can be seen from the polarization identity 
\begin{align*}
\inner{Tx,y} &= \tfrac{1}{4}(\inner{T(x+y),x+y} - \inner{T(x-y),x-y} \\ &+ i\inner{T(x+iy),x+iy} - i\inner{T(x-iy),x-iy}) \text{ for all } x, y \in \D(T).
\end{align*}
Therefore Theorem \ref{thm:rays} implies the following.

\begin{corollary}
If $T$ is a densely defined unbounded operator on a complex Hilbert space $\HH$, then $\We(T)$ is either empty or unbounded.
\end{corollary}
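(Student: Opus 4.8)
The plan is to combine Theorem \ref{thm:rays} with the well-known fact that the numerical range of a densely defined unbounded operator is unbounded. First I would dispose of the trivial case: if $\We(T) = \emptyset$, there is nothing to prove, so assume there is some $w \in \We(T)$. The goal is then to produce a whole ray inside $\We(T)$, which in particular forces $\We(T)$ to be unbounded.

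The second step is to argue that $W(T)$ is unbounded. Since $T$ is densely defined and unbounded, there is a sequence $u_n \in \D(T)$ with $\|u_n\| = 1$ and $\|Tu_n\| \to \infty$. If $\inner{Tu_n, u_n}$ were bounded, then the polarization identity displayed just before the corollary would be the lever: for fixed $n$, one expands $\inner{Tu_n, y}$ for $y$ ranging over the (dense) domain in terms of four diagonal quantities $\inner{Tv,v}$ with $v \in \{u_n \pm y, u_n \pm iy\}$. By density of $\D(T)$ and the Riesz representation of the bounded-looking functional $y \mapsto \inner{Tu_n, y}$, boundedness of $W(T)$ would make $y \mapsto \inner{Tu_n,y}$ a bounded linear functional with norm controlled by $\sup_{z \in W(T)}|z|$ and $\|u_n\|$, hence $\|Tu_n\|$ bounded uniformly in $n$ — contradicting $\|Tu_n\| \to \infty$. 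So $W(T)$ contains points of arbitrarily large modulus; pick $z_n \in W(T)$ with $|z_n| \to \infty$.

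The third step is to extract a limit direction and invoke Theorem \ref{thm:rays}. The unit vectors $\frac{z_n - w}{|z_n - w|}$ lie on the unit circle, which is compact, so after passing to a subsequence they converge to some $v$ with $|v| = 1$. Theorem \ref{thm:rays} (whose hypotheses — $w \in \We(T)$, $z_n \in W(T)$, $|z_n| \to \infty$, and existence of the limit point $v$ — are now all in hand) then gives $\{w + tv : t \ge 0\} \subseteq \We(T)$. Since this ray is unbounded, $\We(T)$ is unbounded, completing the proof.

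The only delicate point is the justification that $W(T)$ is unbounded; everything else is a short appeal to already-proved results plus compactness of the circle. That unboundedness is genuinely standard — it is essentially the statement that a densely defined operator with bounded numerical range is bounded — so the main "obstacle" is really just deciding how much of that classical argument to reproduce versus cite; the polarization identity already written in the excerpt makes the self-contained version short. One should be mildly careful that $w$ could a priori be a limit point of $z_n$ as well, but since $|z_n| \to \infty$ and $w$ is fixed we have $|z_n - w| \to \infty$, so the normalization is legitimate for all large $n$.
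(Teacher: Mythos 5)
Your proposal is correct and follows essentially the same route as the paper: the paper likewise notes that $W(T)$ is unbounded for a densely defined unbounded operator (via the polarization identity displayed just before the corollary) and then invokes Theorem \ref{thm:rays} to place an entire ray inside a nonempty $\We(T)$. The only difference is that you spell out the standard polarization/Riesz argument and the compactness of the unit circle in more detail than the paper, which simply cites these facts.
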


The essential numerical range can also be defined for sesquilinear forms. Let $\mathfrak{t}$ be a sesquilinear form on $\HH$ with domain $\D(\mathfrak{t})$. The numerical range $W(\mathfrak{t})$ and the essential numerical range $\We(\mathfrak{t})$ are defined as
$$W(\mathfrak{t}) = \{ \mathfrak{t}[x] \, : \, x \in \D(\mathfrak{t}), \|x \| = 1 \}$$
and 
$$\We(\mathfrak{t}) = \{ z \in \C \, : \, \exists \, x_n \in \D(\mathfrak{t}) \text{ with } \|x_n \| = 1, x_n \xrightarrow{w} 0, \mathfrak{t}[x_n] \rightarrow z \}.$$
The definition of the essential numerical range for forms was introduced in \cite{BoMaTr19}. Note that both the numerical range and the essential numerical range of a sesquilinear form are convex, and the essential numerical range is closed.  In addition, Theorem \ref{thm:rays} applies to $\We(\mathfrak{t})$ for any sesquilinear form $\mathfrak{t}$, and the proof is the same. 

For a sesquilinear form $\mathfrak{t}$, the adjoint $\mathfrak{t^*}$ is defined by
$$\mathfrak{t^*}[x,y] = \overline{\mathfrak{t}[y,x]}$$
for all $x, y \in \D(\mathfrak{t})$ and $\D(\mathfrak{t^*}) = \D(\mathfrak{t})$.  A sesquilinear form $\mf{t}$ is \emph{symmetric} if $\mf{t} = \mf{t^*}$. The real and imaginary parts of a form $\mathfrak{t}$ are
$$\re \mathfrak{t} = \tfrac{1}{2} (\mathfrak{t}+ \mathfrak{t^*}) \text{ and }  \im \mathfrak{t} = \tfrac{1}{2i} (\mathfrak{t} - \mathfrak{t^*}).$$
Unlike with unbounded operators, it is always possible to define the real and imaginary part of a sesquilinear form, and the domains of the real and imaginary parts are the same as the domain of the original form.

A sesquilinear form $\mathfrak{t}$ is \emph{sectorial} if there is a vertex $v \in \C$ and an angle $\theta < \pi/2$ such that $|\arg(z-v)| \le \theta$ for all $z \in W(\mathfrak{t})$.  Likewise, an operator $T$ on $\HH$ is \emph{sectorial} if the sesquilinear form $\mathfrak{q}_T$ defined by $\mathfrak{q}_T[x,y] = \inner{Tx,y}$ with domain $\D(\mf{q}_T) = \D(T)$ is sectorial. The following lemma is essentially \cite[Theorem 3.10(ii)]{BoMaTr19}, although there it is stated for $m$-sectorial operators, rather than sectorial forms. The proof is the same, however, and we include it because it is short.
\begin{lemma} \label{lem:sectorial}
Let $\mathfrak{t}$ be a sectorial sesquilinear form on $\HH$.  Then $\re \We(\mathfrak{t}) = \We(\re \mathfrak{t})$.  
\end{lemma}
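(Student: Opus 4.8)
The plan is to prove the two inclusions separately, with the nontrivial content confined to one of them.

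The inclusion $\re\We(\mathfrak{t}) \subseteq \We(\re\mathfrak{t})$ is immediate. If $z \in \We(\mathfrak{t})$, pick $x_n \in \D(\mathfrak{t})$ with $\|x_n\|=1$, $x_n \xrightarrow{w} 0$, and $\mathfrak{t}[x_n] \to z$. Since $(\re\mathfrak{t})[x] = \tfrac12(\mathfrak{t}[x] + \overline{\mathfrak{t}[x]}) = \re(\mathfrak{t}[x])$ for every $x \in \D(\mathfrak{t}) = \D(\re\mathfrak{t})$, we get $(\re\mathfrak{t})[x_n] \to \re z$, so $\re z \in \We(\re\mathfrak{t})$.

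For the reverse inclusion $\We(\re\mathfrak{t}) \subseteq \re\We(\mathfrak{t})$, take $\mu \in \We(\re\mathfrak{t})$ and a sequence $x_n \in \D(\mathfrak{t})$ with $\|x_n\|=1$, $x_n \xrightarrow{w} 0$, and $(\re\mathfrak{t})[x_n] = \re(\mathfrak{t}[x_n]) \to \mu$. The key point is that sectoriality controls the imaginary parts: if $v$ is a vertex and $\theta < \pi/2$ a half-angle for $\mathfrak{t}$, then every $w \in W(\mathfrak{t})$ satisfies $|\im(w) - \im(v)| \le (\tan\theta)\,|\re(w) - \re(v)|$. Applying this to $w = \mathfrak{t}[x_n]$ and using that $\re(\mathfrak{t}[x_n])$ is bounded (it converges to $\mu$), we conclude that $\{\mathfrak{t}[x_n]\}$ is a bounded sequence in $\C$. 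Passing to a subsequence, $\mathfrak{t}[x_{n_k}] \to z$ for some $z \in \C$; the same subsequence witnesses $z \in \We(\mathfrak{t})$, and $\re z = \lim_k \re(\mathfrak{t}[x_{n_k}]) = \mu$. Hence $\mu \in \re\We(\mathfrak{t})$.

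The only real obstacle is the one addressed by the sectoriality hypothesis, namely ruling out the possibility that $\im(\mathfrak{t}[x_n]) \to \pm\infty$ along the chosen sequence; once that is excluded, a routine subsequence extraction finishes the argument. Everything else (the identity $(\re\mathfrak{t})[x] = \re(\mathfrak{t}[x])$, preservation of the weak-null and unit-norm conditions under passing to a subsequence) is elementary.
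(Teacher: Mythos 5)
Your proof is correct and follows essentially the same route as the paper's: the easy inclusion by direct passage to real parts, and the reverse inclusion by using sectoriality to bound $\im(\mathfrak{t}[x_n])$ in terms of the convergent real parts and then extracting a convergent subsequence. The only difference is that you spell out the bound $|\im(w)-\im(v)|\le(\tan\theta)|\re(w)-\re(v)|$ explicitly, which the paper leaves implicit.
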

\begin{proof}
If $z \in \We(\mathfrak{t})$, then there is a sequence $x_n \in \D(\mathfrak{t})$, $\|x_n\| = 1$, $x_n \xrightarrow{w} 0$ such that $\mf{t}[x_n] \rightarrow z$. Then $(\re{\mf{t}})[x_n] \rightarrow \re z$ so $\re z \in \We(\re \mf{t})$. This proves that $\re \We(\mf{t}) \subseteq \We(\re \mf{t})$. Conversely, suppose that $a \in \We(\re \mf{t})$.  Let $x_n \in \D(\mathfrak{t})$ be a sequence with $\|x_n\|=1$ and $x_n \xrightarrow{w} 0$ such that $(\re \mf{t})[x_n] \rightarrow a$.  Since $\mf{t}$ is sectorial, $(\im \mf{t})[x_n]$ is bounded.  We may pass to a subsequence such that $(\im \mf{t})[x_n]$ converges to some $b$, and then $\mf{t}[x_n] \rightarrow a+ib \in \We(\mf{t})$.  Therefore $\We(\re \mf{t}) \subseteq \re \We(\mf{t})$.
\end{proof}

Suppose that $\mf{t}$ is a sectorial form on $\HH$.  We say that $\mf{t}$ is \emph{closed} if for any sequence $x_n \in \D(\mf{t})$ such that $x_n$ converges to $x \in \HH$ and $\mf{t}[x_n-x_m] \rightarrow 0$ when $n, m \rightarrow \infty$, we have $x \in \D(\mf{t})$.  A sectorial form is \emph{closable} if it has a closed extension. The \emph{closure} of a sectorial form $\mf{s}$ is the smallest closed extension $\mf{t}$, and its domain is the set of $x,y \in \HH$ such that there exist sequences $x_n, y_n \in \D(\mf{s})$ such that $x_n \rightarrow x$, $y_n \rightarrow y$, and $\mf{s}[x_n - x_m] \rightarrow 0$, $\mf{s}[y_n-y_m] \rightarrow 0$ as $m,n \rightarrow \infty$. For any such $x$ and $y$, $\mf{t}$ is defined by
$$\mf{t}[x, y] = \lim_{n \rightarrow \infty} \mf{s}[x_n, y_n].$$ 
If $\mf{t}$ is a closed sesquilinear form, then a subspace $V \subset \D(\mf{t})$ is a \emph{core} of $\mf{t}$ if the closure of the restriction of $\mf{t}$ to $V$ is $\mf{t}$.  See \cite[Chapter VI]{Kato} for details. Note that if $\mf{t}$ is the closure of a sesquilinear form $\mf{s}$, then $\We(\mf{t}) = \We(\mf{s})$. This was observed in \cite{BoMaTr19}, and is easy to verify from the definition above. 

The following technical lemma will be needed for the proof of our main result, Theorem \ref{thm:weakerMain}. 
\begin{lemma} \label{lem:technical}
Let $\mf{a}$ be a densely defined, closed, symmetric sesquilinear form on $\HH$ that is bounded below.  Let $x, y \in \D(\mf{a})$. Suppose that $x_n \in \D(\mf{a})$ is a sequence that weakly converges to $x$ and $\mf{a}[x_n]$ is bounded.  Then $\lim_{n \rightarrow \infty} \mf{a}[x_n,y] = \mf{a}[x,y]$.   
\end{lemma}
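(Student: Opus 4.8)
The plan is to exploit the standard fact that a densely defined, closed, symmetric form that is bounded below makes its own domain into a Hilbert space, and then to run a weak‑compactness argument inside that space. First I would reduce to the case $\mf{a}[u]\ge\|u\|^2$ for all $u\in\D(\mf{a})$. Since $\mf{a}$ is bounded below, there is an $M\ge 0$ with $\mf{a}[u]\ge -M\|u\|^2$, and I replace $\mf{a}$ by $\mf{a}+(M+1)\inner{\cdot,\cdot}$. This shifted form is still densely defined, symmetric, and closed (adding a bounded form does not affect closedness), and it now satisfies $\mf{a}[u]\ge\|u\|^2$. The reduction is harmless: a weakly convergent sequence is norm bounded, so $\mf{a}[x_n]$ is bounded if and only if the shifted form is bounded along $x_n$, and since $\inner{x_n,y}\to\inner{x,y}$ the conclusion $\mf{a}[x_n,y]\to\mf{a}[x,y]$ is equivalent before and after the shift. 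So I assume $\mf{a}[u]\ge\|u\|^2$ henceforth.

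Under that assumption $\inner{u,v}_{\mf{a}}:=\mf{a}[u,v]$ is an honest inner product on $\D(\mf{a})$, and the hypothesis that $\mf{a}$ is closed is, as is well known (see \cite[Chapter VI]{Kato}), equivalent to the completeness of $\HH_{\mf{a}}:=(\D(\mf{a}),\inner{\cdot,\cdot}_{\mf{a}})$, i.e. $\HH_{\mf{a}}$ is a Hilbert space. The natural inclusion $\iota\colon\HH_{\mf{a}}\hookrightarrow\HH$ is bounded, since $\|u\|\le\|u\|_{\mf{a}}$, hence it is continuous for the respective weak topologies. The assumption that $\mf{a}[x_n]$ is bounded says precisely that $(x_n)$ is a bounded sequence in $\HH_{\mf{a}}$.

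Next I would argue along subsequences. Given an arbitrary subsequence of $(x_n)$, weak sequential compactness of bounded sets in the Hilbert space $\HH_{\mf{a}}$ produces a further subsequence $x_{n_k}$ converging weakly in $\HH_{\mf{a}}$ to some $z\in\D(\mf{a})$. Pushing this through $\iota$, we get $x_{n_k}\xrightarrow{w} z$ in $\HH$; but $x_{n_k}\xrightarrow{w} x$ in $\HH$ by hypothesis, so $z=x$. Testing the weak convergence in $\HH_{\mf{a}}$ against the fixed vector $y\in\D(\mf{a})$ gives $\mf{a}[x_{n_k},y]=\inner{x_{n_k},y}_{\mf{a}}\to\inner{x,y}_{\mf{a}}=\mf{a}[x,y]$. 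Thus every subsequence of the scalar sequence $\bigl(\mf{a}[x_n,y]\bigr)$ has a further subsequence converging to $\mf{a}[x,y]$, and therefore the whole sequence converges to $\mf{a}[x,y]$.

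The only genuinely substantive step is the identification of $\HH_{\mf{a}}$ as a Hilbert space — which is exactly the conjunction of ``closed'' and ``bounded below'' — together with the observation that weak convergence in $\HH_{\mf{a}}$ passes to weak convergence in $\HH$ through the bounded inclusion $\iota$. Everything else (the scalar shift, the weak‑compactness extraction, and the subsequence principle used to upgrade to the full sequence) is routine, and I do not anticipate any real obstacle beyond this bookkeeping.
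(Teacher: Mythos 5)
Your proof is correct. It takes a genuinely different (though parallel) route from the paper's. The paper invokes the second representation theorem to write $\mf{a}[u,v]=\inner{A^{1/2}u,A^{1/2}v}-c\inner{u,v}$ for a nonnegative self-adjoint $A$ with $\D(A^{1/2})=\D(\mf{a})$, notes that boundedness of $\mf{a}[x_n]$ makes $\|A^{1/2}x_n\|$ bounded, extracts a weak limit point of $A^{1/2}x_n$, and identifies it as $A^{1/2}x$ via the weak closedness of the graph of $A^{1/2}$; the conclusion then follows by passing to the limit in the inner products. You instead work intrinsically with the form: after the scalar shift, closedness plus the lower bound make $(\D(\mf{a}),\mf{a}[\cdot,\cdot])$ a Hilbert space $\HH_{\mf{a}}$ continuously embedded in $\HH$, and the conclusion is just weak convergence in $\HH_{\mf{a}}$ tested against $y$, with the limit pinned down through the weak-weak continuity and injectivity of the embedding. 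The two arguments hinge on the same phenomenon (a bounded sequence in the form norm has a weak limit identifiable from the weak limit in $\HH$), but your key input is Kato's Theorem VI.1.11 (closed $\Leftrightarrow$ form domain complete) rather than the second representation theorem, which makes your version slightly more elementary and self-contained at the level of forms; the paper's version has the advantage of producing the operator $A^{1/2}$, which it reuses elsewhere (e.g.\ in the proof of Theorem \ref{thm:weakerMain}). Both correctly handle the upgrade from subsequential to full convergence — you do it explicitly via the subsequence principle, the paper implicitly via uniqueness of the weak limit point of a bounded sequence.
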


\begin{proof}
Since $\mf{a}$ is bounded below, there is a constant $c \in \R$ such that $\mf{a}+c$ is nonnegative. Then, by the second representation theorem for sesquilinear forms \cite[Theorem VI.2.23]{Kato}, there is a nonnegative self adjoint operator $A$ such that $\mf{a}[x,y] = \inner{A^{1/2}x,A^{1/2}y} - c\inner{x,y}$ for all $x,y \in \D({\mf{a}})$ and $\D({\mf{a}}) = \D(A^{1/2})$.  Since $\|x_n\|$ and $\mf{a}[x_n]$ are bounded, it follows that $\|A^{1/2} x_n\|$ is bounded.  So $A^{1/2} x_n$ must have a weak limit point $u \in \HH$. 
Since the graph of $A^{1/2}$ is closed in the weak topology, the pair $(x,u)$ must be in the graph, so $u = A^{1/2}x$ and $A^{1/2} x_n \xrightarrow{w} A^{1/2}x$.  
This implies that $\mf{a}[x_n,y] = \inner{A^{1/2}x_n,A^{1/2}y} - c \inner{x_n,y}$ converges to $\inner{A^{1/2}x,A^{1/2}y} - c \inner{x,y} = \mf{a}[x,y]$.
\end{proof}

\section{Perturbations of Self-Adjoint Operators} \label{sec:perturb}

Let $A$ be a self-adjoint operator on $\HH$ that is bounded below. The sesquilinear form $\mf{q}_A$ defined by $\mf{q}_A[x,y]= \inner{Ax,y}$ with domain $\D(\mf{q}_A) =\D(A)$ is closable \cite[Theorem VI.1.27]{Kato}.  Let $\mathfrak{a}$ denote its closure.  Then $\mathfrak{a}$ is a closed, densely defined, symmetric form that is bounded below. If $\xi < \min \sigma(A)$, then $\D(\mf{a}) = \D((A- \xi)^{1/2})$, and 
$$\mf{a}[x,y] = \inner{(A-\xi)^{1/2}x,(A-\xi)^{1/2}y} + \xi \inner{x,y}$$ 
for all $x,y \in \D(\mf{a})$. This follows immediately from the second representation theorem for sesquilinear forms \cite[Theorem VI.2.23 and Problem VI.2.25]{Kato}. We will refer to the form $\mf{a}$ as the closed sesquilinear form corresponding to $A$.  The correspondence also works the other way. That is, for any densely defined, closed, symmetric, sesquilinear form $\mf{a}$ there is a unique self-adjoint operator $A$ such that $\D(A)$ is a core for $\mf{a}$ \cite[Theorems VI.2.1 and VI.2.6]{Kato}.  In particular the essential numerical range of $A$ and $\mf{a}$ are the same, so $\min \sige(A) = \min \We(A) = \min \We(\mf{a})$. 

A family $\mf{a}(t)$ of sesquilinear forms defined in a neighborhood of $0$ is called \emph{holomorphic of type (a)} if each $\mf{a}(t)$ is closed and sectorial, the domain $\D(\mf{a}(t))$ is constant and dense in $\HH$, and $\mf{a}(t)[x,y]$ is a holomorphic function of $t$ in a neighborhood of 0 for all $x, y \in \D(\mf{a}(t))$. Since the domain is constant, we'll write $\D(\mf{a})$ in place of $\D(\mf{a}(t))$. If each $\mf{a}(t)$ is symmetric when $t$ is real, then for every real $t$ in a neighborhood of zero there is a self-adjoint operator $A(t)$ such that $\inner{A(t)x,y} = \mf{a}(t)[x,y]$ for all $x, y \in \D(A(t))$, and the domain of each $A(t)$ is a core for $\D(\mf{a})$.  This family $A(t)$ is a \emph{holomorphic family of type (B)} \cite{Kato}. 

\begin{lemma} \label{lem:powerSeries}
Let $A(t)$ be a self-adjoint holomorphic family of type (B).  Let $\mf{a}(t)$ be the corresponding type (a) family of sesquilinear forms. There is a constant $R > 0$ and a sequence of symmetric sesquilinear forms $\mf{a}_k$ such that 
\begin{equation} \label{eq:powerSeries}
\mf{a}(t)[x,y] = \mf{a}_0[x,y] + t \mf{a}_1 [x,y] + t^2 \mf{a}_2[x,y] + \ldots
\end{equation}
for all $|t| < R$. Each $\mf{a}_k$ has $\D(\mf{a}) \subseteq \D(\mf{a}_k)$ and for any $0 < r < R$, there are constants $b, c \ge 0$ such that 
\begin{equation} \label{eq:Bbound}
|\mf{a}_k[x,y]| \le \frac{c}{r^k} \sqrt{\mf{a}_0[x]+b \|x\|^2} \sqrt{\mf{a}_0[y]+b \|y\|^2}
\end{equation}
for all $x, y \in \D(\mf{a})$ and $k \in \N$. In particular, the series \eqref{eq:powerSeries} converges absolutely for all $x, y \in \D(\mf{a})$ when $|t| < R$.  
\end{lemma}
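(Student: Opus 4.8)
The plan is to recognise \eqref{eq:powerSeries}--\eqref{eq:Bbound} as a statement about a holomorphic, locally bounded family of \emph{bounded} sesquilinear forms on the form space of $\mf a_0$, and then to read off the coefficients $\mf a_k$ from Cauchy's integral formula. Set $\mf a_0 := \mf a(0)$; since $A(0)$ is self-adjoint and bounded below, $\mf a_0$ is a closed, symmetric form bounded below, and we may fix $b \ge 1 - \min\sigma(A(0))$ (independent of $r$), so that $\mf a_0[u] + b\|u\|^2 \ge \|u\|^2$ on $\D(\mf a)$. Then $\HH_0 := \D(\mf a)$ with inner product $\inner{u,v}_0 := \mf a_0[u,v] + b\inner{u,v}$ is a Hilbert space --- completeness is exactly the closedness of $\mf a_0$ --- whose norm $\|u\|_0 = (\mf a_0[u]+b\|u\|^2)^{1/2}$ dominates $\|\cdot\|$, and the right-hand side of \eqref{eq:Bbound} is $\tfrac{c}{r^k}\|x\|_0\|y\|_0$.

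First I would show that for every fixed $t$ with $|t| < R$ the form $\mf a(t)$ is bounded on $\HH_0$. Since $\mf a(t)$ is sectorial, pick $b_t \in \R$ with $\re\mf a(t) + b_t \ge \|\cdot\|^2$; as $\mf a(t)$ is closed, $\D(\mf a)$ is complete in the form norm $\|u\|_t := (\re\mf a(t)[u] + b_t\|u\|^2)^{1/2}$, so $\HH_t := (\D(\mf a),\|\cdot\|_t)$ is a Hilbert space. Now $\HH_0$ and $\HH_t$ are two Hilbert-space norms on the same vector space $\D(\mf a)$, and the identity map $\HH_0 \to \HH_t$ has closed graph, because both norms dominate $\|\cdot\|$ and limits in $\HH$ are unique. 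By the closed graph theorem $\|\cdot\|_t \le M_t\|\cdot\|_0$; hence $|\re\mf a(t)[u]| \le \|u\|_t^2 + |b_t|\|u\|^2 \le (M_t^2+|b_t|)\|u\|_0^2$, and combining this with the sectoriality estimate bounding $|\im\mf a(t)[u]|$ by a multiple of $\|u\|^2$ plus a multiple of $|\re\mf a(t)[u]|$ gives $|\mf a(t)[u]| \le C_t\|u\|_0^2$. Thus $\mf a(t)$ is a bounded sesquilinear form on $\HH_0$.

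Next, fix $0 < r < R$. For each $x,y \in \D(\mf a)$ the scalar function $t \mapsto \mf a(t)[x,y]$ is holomorphic on $\{|t| < R\}$, hence bounded on the compact disc $\{|t| \le r\}$. Representing $\mf a(t)[u,v] = \inner{S_t u,v}_0$ with $S_t \in \mathcal B(\HH_0)$ (Riesz, using the previous paragraph), the family $\{S_t : |t| \le r\}$ is pointwise bounded, so two applications of the uniform boundedness principle give $c := \sup_{|t|\le r}\|S_t\| < \infty$, i.e. $|\mf a(t)[u,v]| \le c\|u\|_0\|v\|_0$ whenever $|t|\le r$. Now define
$$\mf a_k[x,y] := \frac{1}{2\pi i}\oint_{|t|=r}\frac{\mf a(t)[x,y]}{t^{k+1}}\,dt \qquad (x,y \in \D(\mf a));$$
these are sesquilinear forms on a domain containing $\D(\mf a)$, Cauchy's estimate yields $|\mf a_k[x,y]| \le r^{-k}\max_{|t|=r}|\mf a(t)[x,y]| \le \tfrac{c}{r^k}\|x\|_0\|y\|_0$, which is \eqref{eq:Bbound}, and by Taylor's theorem $\mf a(t)[x,y] = \sum_k t^k\mf a_k[x,y]$ on $\{|t|<R\}$ (for $|t| < R$ pick $r$ with $|t| < r < R$ and compare with a geometric series to get absolute convergence). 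Finally, for real $t$ we have $\mf a(t) = \mf a(t)^*$, so $\sum_k t^k\mf a_k[x,y] = \overline{\mf a(t)[y,x]} = \sum_k t^k\overline{\mf a_k[y,x]}$; matching coefficients shows each $\mf a_k$ is symmetric.

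The routine-looking pieces --- the choices of $b$ and $b_t$, the sectoriality estimate on $|\im\mf a(t)[u]|$, and the Cauchy/Taylor bookkeeping --- are genuinely routine, so the main obstacle is the second paragraph together with the uniformity in the third: although \eqref{eq:powerSeries} superficially reads like a formal power-series identity, the fact that each $\mf a(t)$ is form-bounded relative to $\mf a_0$ is \emph{not} immediate from the definition of a type (a) family and really requires the closed graph theorem to exploit the closedness of the forms, while upgrading the pointwise-in-$(x,y)$ holomorphy to a locally uniform bound on $\mf a(t)$ needs the uniform boundedness principle. All of this is essentially contained in \cite[Chapter VII]{Kato}.
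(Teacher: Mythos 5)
Your proof is correct, but it takes a genuinely different (though ultimately unitarily equivalent) route from the paper's. The paper conjugates by $(A+bI)^{1/2}$: it sets $B(t) = (A+bI)^{-1/2}A(t)(A+bI)^{-1/2}$, cites the proof of Kato's Theorem VII.4.2 for the fact that $B(t)$ is a bounded-holomorphic family of self-adjoint operators on $\HH$, expands $B(t) = B_0 + tB_1 + \cdots$ absolutely in operator norm, and defines $\mf{a}_k[x,y] = \inner{B_k(A+bI)^{1/2}x,(A+bI)^{1/2}y}$; the bound \eqref{eq:Bbound} is then Cauchy--Schwarz combined with Cauchy's inequality $\|B_k\|\le c/r^k$ and the identity $\|(A+bI)^{1/2}v\|^2 = \mf{a}_0[v]+b\|v\|^2$. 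Your argument is the intrinsic, form-level version of the same idea: your $\HH_0$ is identified with $\HH$ via the unitary $(A+bI)^{1/2}:\HH_0\to\HH$, your $S_t$ is conjugate to the paper's $B(t)$, and the Cauchy integral formula applied to the scalar functions $t\mapsto\mf{a}(t)[x,y]$ produces the same coefficients and the same estimate as the operator-norm power series. What your route buys is self-containedness: rather than quoting Kato VII.4.2 as a black box, you re-derive its key content --- that $\mf{a}(t)$ is form-bounded relative to $\mf{a}_0$, uniformly on compact subdiscs --- from the closed graph theorem (exploiting closedness of each $\mf{a}(t)$) together with two applications of the uniform boundedness principle (exploiting the pointwise holomorphy); this is essentially how Kato's theorem is proved, so you have correctly identified where the real work lies. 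What the paper's route buys is brevity and the self-adjointness of each $B_k$ for free, which immediately gives symmetry of $\mf{a}_k$; your coefficient-matching argument for symmetry on the real axis is a correct substitute.
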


\begin{proof}
The self-adjoint operator $A = A(0)$ is bounded below, so we can choose $b > 0$ such that $-b < \min \sigma(A)$. Then $(A+bI)^{-1/2}$ is a bounded self-adjoint operator. The family $B(t) = (A+bI)^{-1/2} A(t) (A+bI)^{-1/2}$ is a bounded-holomorphic family of self-adjoint operators (see the proof of \cite[Theorem VII.4.2]{Kato}).  Therefore it has a series expansion which converges absolutely in norm for all $t$ such that $|t| < R$ for some $R > 0$:
$$B(t) = B_0 + t B_1 + t^2 B_2 + \ldots.$$
Then 
\begin{align*}
\mf{a}(t)[x,y] &= \inner{B(t) (A+bI)^{1/2} x, (A+bI)^{1/2} y} \\
&= \sum_{k = 0}^{\infty} t^k \inner{B_k (A+bI)^{1/2} x, (A+bI)^{1/2} y}
\end{align*}
for any $x,y \in \D(\mf{a})$.  Let $\mf{a}_k[x,y] = \inner{B_k (A+bI)^{1/2} x, (A+bI)^{1/2} y}$ for all $x, y \in \D(\mf{a})$. Let $0 < r < R$. Since the series for $B(t)$ converges absolutely in norm when $|t| < R$, there is a constant $c > 0$ such that $\|B_k\| \le c/r^k$ for all $k$. Observe that $\|(A+bI)^{1/2}v\|^2 = \mf{a}_0[v] + b \|v\|^2$ for any $v \in \D(\mf{a})$. Therefore \eqref{eq:Bbound} follows from the Cauchy-Schwarz inequality. 
\end{proof}

As observed in the proof, the constant $b$ in \eqref{eq:Bbound} can be any value such that $-b < \min \sigma(A)$.  It will be convenient for a type (B) family $A(t)$ to fix such a constant $b$, and introduce the norm $\|x\|_\mf{a} = \sqrt{\mf{a}_0[x] + b\|x\|}$.  Then \eqref{eq:Bbound} becomes
$$|\mf{a}_k[x,y]| \le \frac{c}{r^k} \|x\|_\mf{a} \|y \|_\mf{a}.$$
Since the series in Lemma \ref{lem:powerSeries} converges absolutely when $|t| < R$, if we fixed $0 < r < R$, then there are constants $M_1, M_2 > 0$ such that 
\begin{equation} \label{eq:M1}
|\mf{a}(t)[x,y] - \mf{a}_0[x,y]| \le M_1 t \|x\|_\mf{a} \|y\|_\mf{a}
\end{equation}
and 
\begin{equation} \label{eq:M2}
|\mf{a}(t)[x,y] - (\mf{a}_0+t\mf{a}_1)[x,y]| \le M_2 t^2 \|x\|_\mf{a} \|y\|_\mf{a}
\end{equation}
for all $x,y \in \D(\mf{a})$ when $|t| \le r < R$. If $t> 0$ is sufficiently small so that $M_1 t$ and $M_1 t+ M_2 t^2$ are both less than $\tfrac{1}{2}$, then \eqref{eq:M1} and \eqref{eq:M2} imply that 
\begin{equation} \label{eq:uniform}
\begin{aligned}
\tfrac{1}{2} \|x\|_\mf{a} &\le \mf{a}(t)[x]+b\|x\|^2 \le \tfrac{3}{2} \|x\|_\mf{a} \text{ and } \\
\tfrac{1}{2} \|x\|_\mf{a} &\le (\mf{a}+t\mf{a}_1)[x]+b\|x\|^2 \le \tfrac{3}{2} \|x\|_\mf{a} 
\end{aligned}
\end{equation}
for any $x \in \D(\mf{a})$.

The next result connects the minimum of the essential spectrum of $A(t)$ with the essential numerical range of an auxiliary sesquilinear form.   
\begin{proposition} \label{prop:bottom}
Let $A(t)$ be a self-adjoint holomorphic family of type (B) on a complex Hilbert space $\HH$. 
Let $\Sigma(t) = \min \sige(A(t))$ and assume that $\Sigma(0) = 0$, let $\mf{a}(t)$ be the family of sesquilinear forms corresponding to $A(t)$, and let $\mathfrak{t} = \mathfrak{a}_0 + i\mathfrak{a}_1$ where $\mf{a}_0, \mf{a}_1$ are defined as in Lemma \ref{lem:powerSeries}.  Then $\lim_{t \rightarrow 0^+} \Sigma(t)/t = \omega$ where $\omega$ is the smallest value such that $i \omega \in \We(\mathfrak{t})$.   
\end{proposition}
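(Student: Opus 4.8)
The plan is to relate $\Sigma(t)/t$ to the essential numerical range of the auxiliary form $\mf{t} = \mf{a}_0 + i\mf{a}_1$ by a two-sided comparison, using the fact that for a self-adjoint operator bounded below, $\min \sige = \min \We = \min \We(\text{corresponding form})$. Fix $t > 0$ small enough that \eqref{eq:uniform} holds, so the form $\mf{a}(t)$ is bounded below and its associated self-adjoint operator is $A(t)$; then $\Sigma(t) = \min \We(\mf{a}(t))$. The key is that $\We(\mf{a}(t))$ and $\We(\mf{a}_0 + t\mf{a}_1)$ are close in a quantitative sense controlled by \eqref{eq:M2}: any approximating sequence $x_n$ (norm one, $x_n \xrightarrow{w} 0$) that realizes a point near $\min \We(\mf{a}(t))$ also has $\|x_n\|_\mf{a}$ bounded (by \eqref{eq:uniform}), so $|\mf{a}(t)[x_n] - (\mf{a}_0 + t\mf{a}_1)[x_n]| \le M_2 t^2 \|x_n\|_\mf{a}^2 = O(t^2)$. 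Hence
\begin{equation*}
\Sigma(t) = \min \We(\mf{a}_0 + t \mf{a}_1) + O(t^2).
\end{equation*}
So it suffices to show $\lim_{t \to 0^+} \frac{1}{t}\min \We(\mf{a}_0 + t\mf{a}_1) = \omega$, where $\omega$ is the smallest real number with $i\omega \in \We(\mf{t})$.

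Next I would identify $\min \We(\mf{a}_0 + t\mf{a}_1)$ with a quantity visible in $\We(\mf{t})$. Consider the line $\ell_t = \{ s + its : s \in \R\} \subset \C$; these are exactly the directions $(1, t)$ in the $(\re, \im)$-plane. For a sequence $x_n$ with $\|x_n\| = 1$, $x_n \xrightarrow{w} 0$, the pair $(\mf{a}_0[x_n], \mf{a}_1[x_n])$ is (after passing to a subsequence, using that along any such sequence realizing something near the bottom, $\mf{a}_0[x_n]$ is bounded below by hypothesis $\Sigma(0) = 0$ and bounded above via $\|x_n\|_\mf{a}$; and $\mf{a}_1[x_n]$ is bounded by \eqref{eq:Bbound}) convergent to a point $(\alpha, \beta)$ with $\alpha + i\beta \in \We(\mf{t})$. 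Conversely every point of $\We(\mf{t})$ arises this way. Therefore
\begin{equation*}
\min \We(\mf{a}_0 + t\mf{a}_1) = \min \{ \alpha + t\beta : \alpha + i\beta \in \We(\mf{t}) \},
\end{equation*}
i.e.\ it is the value of the support function of the closed convex set $\We(\mf{t})$ in the direction $(1,t)$ (up to sign), after dividing by the appropriate normalization. Because $\Sigma(0) = 0$ means $0 = \min \We(\mf{a}_0)$, the set $\We(\mf{t})$ lies in the closed right half-plane $\re \ge 0$ and touches the imaginary axis; since $\We(\mf{t})$ is closed and convex (Lemma \ref{lem:basics} for the form), $\We(\mf{t}) \cap i\R$ is a closed interval or ray, and $i\omega$ is its lowest point. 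Now a convexity/support-function argument gives $\frac{1}{t}\min\{\alpha + t\beta : \alpha + i\beta \in \We(\mf{t})\} \to \beta^* $ as $t \to 0^+$, where $\beta^*$ is the infimum of $\{\beta : i\beta \in \We(\mf{t})\}$, namely $\omega$: points of $\We(\mf{t})$ with $\alpha > 0$ contribute $\frac{\alpha}{t} + \beta \to +\infty$, so the minimum is asymptotically achieved on or infinitesimally near the imaginary axis, and there the value is $\beta$, minimized at $\omega$. Here the unboundedness of $\We(\mf{t})$ in the imaginary direction is irrelevant for the lower edge; what matters is that $\We(\mf{t})$ has a lowest point on $i\R$, which holds because it is closed and, along the imaginary axis, bounded below — this last point follows since $\mf{a}_0 \ge 0$ forces, via the inequality $\Sigma(t) \ge 0$... actually more carefully, it follows from $\Sigma(t)/t$ being bounded below as $t \to 0^+$, which is exactly what the $O(t^2)$ estimate plus $\Sigma(t) \ge$ (something) gives; alternatively one argues directly that $\inf\{\beta : i\beta \in \We(\mf{t})\} > -\infty$ using \eqref{eq:Bbound} with a clever choice of $b$.

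The main obstacle I anticipate is the exchange of limits and suprema/infima: showing rigorously that $\min \We(\mf{a}(t))$ is attained (or approached) along sequences for which $(\mf{a}_0[x_n], \mf{a}_1[x_n])$ converges, and that one may pass to $\We(\mf{t})$ without losing the minimum — in particular handling the case where the minimizing configurations escape to $\alpha = 0^+$ with $\beta$ large. I would handle this by a diagonal argument: for each $t$ in a sequence $t \to 0^+$, pick a near-minimizer $x_n^{(t)}$; the bounds \eqref{eq:uniform} and \eqref{eq:Bbound} make $\{(\mf{a}_0[x_n^{(t)}], \mf{a}_1[x_n^{(t)}])\}$ lie in a fixed compact set once $t$ is bounded (since $\mf{a}_0[x_n^{(t)}] \le \frac{3}{2}\|x_n^{(t)}\|_\mf{a}$ and $\|x_n^{(t)}\|_\mf{a}$ is controlled by $\Sigma(t) + b$ which is bounded); extract a convergent subsequence to get a point of $\We(\mf{t})$ realizing the $\liminf$; for the reverse inequality, start from $i\omega \in \We(\mf{t})$, take its approximating sequence, and feed it back through \eqref{eq:M2} to bound $\limsup_{t\to 0^+}\Sigma(t)/t \le \omega$. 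Combining the two inequalities yields the claim. A secondary technical point — that $A(t)$ really is self-adjoint and equals the operator associated to $\mf{a}(t)$ for small real $t$ — is already guaranteed by the type (B) framework reviewed before the proposition.
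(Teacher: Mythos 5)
Your proposal follows the same two-step architecture as the paper's proof: first the reduction $\Sigma(t) = \Sigma_1(t) + O(t^2)$ with $\Sigma_1(t) = \min \We(\mf{a}_0 + t\mf{a}_1)$ via \eqref{eq:M2} and \eqref{eq:uniform} (the paper does this step last, you do it first), and second the identification of $\Sigma_1(t)$ with $\min\{\alpha + t\beta : \alpha + i\beta \in \We(\mf{t})\}$, which is exactly the paper's $\min \re\We((1-it)\mf{t})$ obtained from Lemma \ref{lem:sectorial}. Where you genuinely diverge is the one nontrivial inequality, $\liminf_{t\to0^+}\Sigma_1(t)/t \ge \omega$. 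The paper proves it geometrically: for $\omega' < \omega$ it separates $i\omega'$ from the closed convex set $\We(\mf{t})$ by a non-vertical line and reads off $\Sigma_1(t) \ge t\omega'$ from the rotated half-plane picture. Your headline justification ("points with $\alpha>0$ contribute $\alpha/t+\beta\to+\infty$") is not a proof --- the minimizer moves with $t$, and the danger is precisely a sequence $(\alpha_k,\beta_k)\in\We(\mf{t})$ with $\alpha_k\to0^+$ and $\beta_k\to-\infty$ --- but the diagonal/compactness argument in your final paragraph does close this and should be promoted to the actual proof: since $\Sigma_1(t)\le t\omega$ (feed an approximating sequence for $i\omega$ into $\mf{a}_0+t\mf{a}_1$), \eqref{eq:uniform} bounds $\|\cdot\|_\mf{a}$ along near-minimizers uniformly in small $t$, and then \eqref{eq:Bbound} bounds $\beta_t$, so the pairs $(\alpha_t,\beta_t)$ stay in a fixed compact subset of $\We(\mf{t})$; any limit point is $(0,\beta_*)$ with $\beta_*\ge\omega$, and $\liminf\Sigma_1(t)/t = \liminf(\alpha_t/t+\beta_t)\ge\beta_*$. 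Two points you should state explicitly rather than wave at: $\omega$ is well defined because the sectoriality bound \eqref{eq:Bbound} makes $\We(\mf{t})\cap i\R$ bounded, hence compact; and the two inclusions behind your displayed equality for $\min\We(\mf{a}_0+t\mf{a}_1)$ both need the boundedness of $\mf{a}_1[x_n]$ along the relevant sequences. In the end each route buys something: the paper's separating line makes the convex geometry visible in one picture, while your version uses only the quantitative estimates already set up after Lemma \ref{lem:powerSeries}, at the cost of more subsequence bookkeeping.
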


\begin{proof}
Observe that $\Sigma(t) = \min \We(A(t)) = \min \We(\mathfrak{a}(t))$. The bound in \eqref{eq:Bbound} implies that $\mf{t}$ is sectorial, and also that $(1-it)\mf{t}$ is sectorial when $0 < t < \epsilon$ as long as $\epsilon$ is chosen sufficiently small. So 
$$\re \We((1-it) \mathfrak{t}) = \We(\re ((1-it) \mathfrak{t})) = \We(\mathfrak{a}_0+t \mathfrak{a}_1)$$ 
by Lemma \ref{lem:sectorial}. Let $\Sigma_1(t)$ denote the minimum of $\We(\mathfrak{a}_0+t \mathfrak{a}_1)$. Of course, $\Sigma_1(0) = \Sigma(0) = 0$, and $\We(\mathfrak{t})$ has nonempty intersection with the imaginary axis.  Let $\omega \in \R$ be the minimum value such that $i \omega \in \We(\mathfrak{t})$. 

For any $\omega' < \omega$, there is a line $\ell$ passing through $i \omega'$ that does not intersect $\We(\mathfrak{t})$, see Figure \ref{fig:rotate}. Let $K$ be the closed convex set in $\C$ containing all points to the right of the imaginary axis and above the line $\ell$. In particular, $K$ contains $\We(\mathfrak{t})$.  As long as $t > 0$ is small enough so that $(1-it)\ell$ is not vertical, the leftmost point in $(1-it)K$ is $t\omega' + i \omega'$. Thus, $\re((1-it)K) = [t \omega',\infty)$.  We then have 
$$\Sigma_1(t) \in \re \We((1-it)\mathfrak{t}) \subset \re((1-it)K) = [t \omega', \infty).$$
Therefore, $\Sigma_1(t) \ge t \omega'$ for all $t>0$ sufficiently small.  

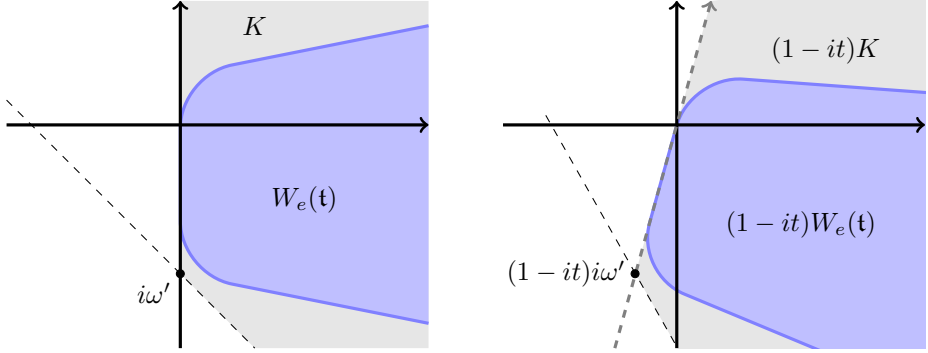
\begin{figure}[h]
\begin{center}
\begin{tikzpicture}[scale=0.66]
\begin{scope}
\fill[fill=black!10] (5,-4.5) -- (1.5,-4.5) -- (0,-3) -- (0,2.5) -- (5,2.5) -- cycle;
\filldraw[fill=blue!25,very thick, draw=blue!50, rounded corners=0.7cm] (5,-4) -- (0,-3) -- (0,1) -- (5,2);
\draw[very thick,->] (-3.5,0) -- (5,0);
\draw[very thick,->] (0,-4.5) -- (0,2.5);
\draw[dashed] (-3.5,0.5) -- (1.5,-4.5);
\draw (1.5,2) node {$K$};
\filldraw (0,-3) circle (0.08) node[below left] {$i \omega'$};
\draw (2.5,-1.5) node {$\We(\mf{t})$};
\end{scope}

\begin{scope}[xshift=10cm]
\clip (-3.7,-4.5) rectangle (5,2.5);
\fill[fill=black!10,,rotate=-15.5,scale=1.03786] (5,-4.5) -- (1.5,-4.5) -- (0,-3) -- (0,2.5) -- (5,4.5) -- cycle;
\filldraw[fill=blue!25,very thick, draw=blue!50, rounded corners=0.7cm,rotate=-15.5,scale=1.03786] (8,-4) -- (0,-3) -- (0,1) -- (5,2);
\draw[very thick,->] (-3.5,0) -- (5,0);
\draw[very thick,->] (0,-4.5) -- (0,2.5);
\draw[very thick,gray,->,dashed,rotate=-15.5,scale=1.03786] (0,-4.5) -- (0,2.5);
\draw[dashed,rotate=-15.5,scale=1.03786] (-2.5,-0.5) -- (1.5,-4.5);
\filldraw (-0.84,-3) circle (0.08) node[left=0.1] {$(1-it)i \omega'$};
\draw (2.5, -2) node {$(1-it)\We(\mf{t})$};
\draw (3,1.5) node {$(1-it)K$};
\end{scope}
\end{tikzpicture}
\end{center}
\caption{$i\omega'$ can be separated from $\We(\mf{t})$ (left). So the real part of $\We((1-it)\mf{t})$ is bounded below for small $t$ (right).}
\label{fig:rotate}
\end{figure}

At the same time, $t \omega \in \re \We((1-it) \mathfrak{t})$, so $\Sigma_1(t) \le t \omega$ for all $0 < t < \epsilon$.  Therefore all limit points of $\Sigma_1(t)/t$ lie between $\omega$ and $\omega'$.  But since $\omega'$ can be chosen arbitrarily close to $\omega$, we conclude that $\lim_{t \rightarrow 0^+} \Sigma_1(t)/t = \omega$. 

As in the comments after Lemma \ref{lem:powerSeries}, choose $b > 0$ so that $-b < \sigma(A)$. Define $\|x\|_\mf{a} = \sqrt{\mf{a}_0[x]+b\|x\|^2}$.  Since $\Sigma(0) = 0$, there is a sequence $x_n \in \D(\mf{a})$, $x_n \xrightarrow{w} 0$, $\|x_n\|=1$, such that $\mf{a}_0[x_n] \rightarrow 0$.  That means $\|x_n\|_\mf{a} \rightarrow 1$, and so it follows from \eqref{eq:M1} and \eqref{eq:M2} that $\mf{a}(t)[x_n]$ and $(\mf{a}_0+t \mf{a}_1)[x_n]$ are bounded.  We can take a subsequence of $x_n$ such that both $\mf{a}(t)[x_n]$ and $(\mf{a}_0+t \mf{a}_1)[x_n]$ converge to points in $w \in \We(\mf{a}(t))$ and $z \in \We(\mf{a}_0+t \mf{a_1})$, respectively.  As long as $t$ is sufficiently small, \eqref{eq:uniform} implies that $z, w < \frac{3}{2}-b$ so $\Sigma(t) \le \tfrac{3}{2} - b$ and $\Sigma_1(t) \le \frac{3}{2} -b$, as well.  

Now, take a sequence $x_n \in \D(\mf{a})$, $x_n \xrightarrow{w} 0$, $\|x_n\|=1$ such that $\mf{a}(t)[x_n] \rightarrow \Sigma(t)$.  Observe that $\limsup_{n \rightarrow \infty} \|x_n\|_\mf{a} \le 2(\Sigma(t)+b)$ by \eqref{eq:uniform}, so $(\mf{a}_0+t\mf{a}_1)[x_n]$ is bounded and we can pass to a subsequence such that $(\mf{a}_0+t\mf{a}_1)[x_n]$ converges to $z \in \We(\mf{a}_0+t\mf{a_1})$. By definition, $\Sigma_1(t) \le z$. Applying \eqref{eq:M2} to the terms $\mf{a}(t)[x_n]$ and $(\mf{a}_0+t\mf{a}_1)[x_n]$ and taking the limit, we see that there is a constant $C > 0$ such that $|\Sigma(t) - z| \le C t^2$ for all $t$ sufficiently small.  Similarly, if we take a sequence $y_n \in \D(\mf{a})$ such that $y_n \xrightarrow{w} 0$, $\|y_n \| =1$, and $(\mf{a}_0+t\mf{a}_1) y_n \rightarrow \Sigma_1(t)$, then we have a subsequence of $y_n$ such that $\mf{a}(t)[y_n] \rightarrow w \in \We(\mf{a}(t))$ and $|\Sigma_1(t) - w| \le C t^2$.  If $\Sigma_1(t) \ge \Sigma(t)$, then $\Sigma_1(t) - \Sigma(t) \le z - \Sigma(t) \le C t^2$.  If, on the other hand, $\Sigma(t) \ge \Sigma_1(t)$, then $\Sigma(t)-\Sigma_1(t) \le w - \Sigma_1(t) \le C t^2$.  Either way, we conclude that $|\Sigma(t) - \Sigma_1(t) | \le C t^2$ when $t$ is small, and therefore $\lim_{t \rightarrow 0^+} \Sigma(t)/t = \lim_{t \rightarrow 0^+} \Sigma_1/t = \omega$.  
\end{proof}

\section{Main Results} \label{sec:main}

Let $A(t)$ be a self-adjoint holomorphic family of type (B) on a complex Hilbert space $\HH$. Let $\mf{a}(t)$ be the type (a) family of sesquilinear forms corresponding to $A(t)$, and suppose that $\mf{a}(t)$ has series expansion $\mf{a}(t) = \mf{a}_0 + t\mf{a}_1 + t^2 \mf{a}_2 + \ldots$ given by Lemma \ref{lem:powerSeries}. If the kernel of $A = A(0)$ is nontrivial, let $P$ denote the orthogonal projection onto the kernel of $A$. Since the quadratic form defined by $x \mapsto \mathfrak{a}_1[Px]$ is bounded and symmetric, there is a bounded self-adjoint operator $B_0$ defined by the sesquilinear form
\begin{equation} \label{eq:B0}
\inner{B_0 x,y} = \mathfrak{a}_1[Px,Py] 
\end{equation}
for all $x, y \in \HH$ \cite[Section V.2.1]{Kato}. With this observation, we are now ready to state our main result. 

\begin{theorem} \label{thm:weakerMain}
Let $A(t)$ be a self-adjoint holomorphic family of type (B). 
Let $\Sigma(t) = \min \sige(A(t))$ and assume that $\Sigma(0) = 0$. If $\lambda(t) < \Sigma(t)$ is an eigenvalue of $A(t)$ that depends continuously on $t$ when $0 < t < \epsilon$, $\lim_{t \rightarrow 0^+} \lambda(t) = 0$, and $\liminf_{t \rightarrow 0^+} (\lambda(t) - \Sigma(t))/t < 0$, then $0$ is an eigenvalue of $A(0)$.  Furthermore, $\lim_{t \rightarrow 0^+} \lambda(t)/t$ exists and  is equal to an eigenvalue of the self-adjoint operator $B_0$ defined by \eqref{eq:B0}.   
\end{theorem}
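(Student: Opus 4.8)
The plan is to track a unit eigenvector $x_t$ of $A(t)$ belonging to $\lambda(t)$, i.e. $\mf{a}(t)[x_t,y]=\lambda(t)\inner{x_t,y}$ for all $y\in\D(\mf{a})$. The elementary input is that \eqref{eq:uniform} and \eqref{eq:M1} keep $\|x_t\|_\mf{a}$ bounded as $t\to 0^+$ and give $\mf{a}_0[x_t]=\lambda(t)+O(t)\to 0$, so $\mf{a}_1[x_t]$ and all the $\mf{a}_k[x_t]$ stay bounded. Writing $E$ for the spectral measure of $A=A(0)$, the first real step is a \emph{concentration estimate}: for a Borel set $S$ with $\operatorname{dist}(S,\lambda(t))>0$ and $\sup S\le\Lambda$, substitute $y=(A-\lambda(t))^{-1}E(S)x_t\in\D(A)$ into the eigenvalue equation; since $\mf{a}_0[x_t,y]=\inner{x_t,Ay}$ this yields $\|E(S)x_t\|^2=|(\mf{a}(t)-\mf{a}_0)[x_t,y]|\le M_1 t\,\|x_t\|_\mf{a}\,\frac{\sqrt{\Lambda+b}}{\operatorname{dist}(S,\lambda(t))}\|E(S)x_t\|$, hence $\|E(S)x_t\|=O(t)$. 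Splitting $\R\setminus(-\gamma,\gamma)$ into a bounded piece (controlled by this estimate, using $\lambda(t)\to 0$) and a high-energy tail $E((\Lambda,\infty))x_t$ (controlled by $\mf{a}_0[x_t]\to 0$ once $\Lambda$ is large) I would conclude $\|x_t-E((-\gamma,\gamma))x_t\|\to 0$ as $t\to 0^+$ for every $\gamma>0$. Consequently, if $x_{t_k}\xrightarrow{w}x_*$ along some $t_k\to 0^+$, then $x_*=E((-\gamma,\gamma))x_*$ for all $\gamma$, so $x_*\in\ker A$; in particular, if $\ker A=\{0\}$ then $x_t\xrightarrow{w}0$ as $t\to 0^+$.

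Next I would use a Feynman--Hellmann representation of $\lambda$. On the open set where $\lambda(t)$ is a simple eigenvalue, $\lambda$ and (up to phase) $x_t$ are real-analytic, and differentiating $\lambda(t)=\mf{a}(t)[x_t]$ while using $\mf{a}(t)[x_t,\dot x_t]=\lambda(t)\inner{x_t,\dot x_t}=0$ gives $\dot\lambda(t)=\mf{a}_1[x_t]+O(t)$; the bounds of Lemma \ref{lem:powerSeries} make $\dot\lambda$ locally bounded, so $\lambda$ is locally Lipschitz on $(0,\epsilon)$, and since $\lambda(0^+)=0$ we get $\lambda(t)=\int_0^t(\mf{a}_1[x_s]+O(s))\,ds$ (crossings of eigenvalue branches are harmless, since the same formula holds along whichever branch the continuous selection follows), hence $\lambda(t)/t=\tfrac1t\int_0^t\mf{a}_1[x_s]\,ds+O(t)$. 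With these two ingredients the first assertion follows by contradiction: if $0\notin\sigma_p(A)$, then by the concentration step $x_s\xrightarrow{w}0$ as $s\to 0^+$, so any subsequential limit $\beta$ of $\mf{a}_1[x_s]$ satisfies $\mf{a}_0[x_s]+i\mf{a}_1[x_s]\to i\beta\in\We(\mf{t})$ with $\mf{t}=\mf{a}_0+i\mf{a}_1$; by the definition of $\omega$ in Proposition \ref{prop:bottom} this forces $\beta\ge\omega$, so $\liminf_{s\to 0^+}\mf{a}_1[x_s]\ge\omega$. Feeding this into the Feynman--Hellmann identity gives $\liminf_{t\to 0^+}\lambda(t)/t\ge\omega=\lim_{t\to 0^+}\Sigma(t)/t$, i.e. $\liminf_{t\to 0^+}(\lambda(t)-\Sigma(t))/t\ge 0$, contradicting the hypothesis. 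Hence $\ker A\neq\{0\}$ and $P\neq 0$.

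For the final assertion I would test the eigenvalue equation against $Pz$: since $P$ reduces $A$ and $\mf{a}_0$ vanishes on $\ker A$, the term $\mf{a}_0[x_t,Pz]$ drops out, leaving $\inner{B_0x_t,z}+\mf{a}_1[(I-P)x_t,Pz]+O(t\|z\|)=\tfrac{\lambda(t)}{t}\inner{Px_t,z}$. The goal is to show that the ``escaping'' part $(I-P)x_t$ tends to $0$ (so that $\|(I-P)x_t\|_\mf{a}\to 0$ by $\mf{a}_0[(I-P)x_t]=\mf{a}_0[x_t]\to 0$, killing the middle term); then $(B_0-\lambda(t)/t)Px_t\to 0$ with $\|Px_t\|\to 1$, and since $\mf{a}_1[x_t]=\lambda(t)/t+o(1)$ the Feynman--Hellmann identity becomes $g(t)=\tfrac1t\int_0^t g(s)\,ds+o(1)$ for $g(t)=\lambda(t)/t$, which (via the resulting differential relation for $t\mapsto\tfrac1t\int_0^t g$) forces $g$ to converge to some $\ell$; passing to a weak limit $x_*\in\ker A$ of $Px_t$ then gives $B_0x_*=\ell x_*$ with $x_*\ne 0$, so $\ell$ is an eigenvalue of $B_0$. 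The hard part — and the one I expect to be the main obstacle — is exactly this uniform control of the eigenvector as $t\to 0^+$: the reduced resolvent of $A(t)-\lambda(t)$ on the complement of the $\lambda(t)$-eigenspace has norm $\sim 1/(\Sigma(t)-\lambda(t))$, which the hypothesis pins down only along a subsequence, while $A(0)$ may carry essential spectrum or eigenvalues accumulating at $0$; showing that $(I-P)x_t\to 0$, and more fundamentally that a nonzero weak limit survives along the rate-realizing sequence, is where the interplay between the rate hypothesis and the geometry of $\We(\mf{t})$ (which records precisely the limiting values of $\mf{a}_1$ along weakly null, $\mf{a}_0$-minimizing sequences) has to be exploited carefully.
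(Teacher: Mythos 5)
Your spectral--concentration estimate plus the averaged Feynman--Hellmann identity does give a correct, and genuinely different, proof of the \emph{first} assertion: every weak limit point of the eigenvectors lies in $\ker A$, so if $\ker A=\{0\}$ then $x_s\xrightarrow{w}0$, whence $\mf{a}_0[x_s]+i\mf{a}_1[x_s]\to i\beta\in\We(\mf{a}_0+i\mf{a}_1)$ forces $\liminf_s\mf{a}_1[x_s]\ge\omega$ and hence $\liminf_t\lambda(t)/t\ge\omega$, contradicting the hypothesis. That half is fine (and arguably cleaner than the paper's, which never introduces the spectral measure).

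The second assertion, however, is exactly where you stop, and the step you flag as ``the main obstacle'' is the theorem's actual content. The paper resolves it as follows. (i) A derivative-selection argument: since $\tfrac{d}{dt}\bigl(\lambda(t)/t\bigr)=\tfrac{1}{t}\bigl(\lambda'(t)-\lambda(t)/t\bigr)$, if $\lambda'(\tau)\ge\beta+2\delta$ a.e.\ on $(t,t_0)$ while $|\lambda(\tau)/\tau-\beta|<\delta$, integration gives $2\delta\ge\delta\log(t_0/t)$, which fails as $t\to0^+$; so one can choose $t_n\to0^+$ with $\lambda(t_n)/t_n\to\beta$ \emph{and} $\lambda'(t_n)\to\beta'<\omega$. (ii) Along that sequence $(\mf{a}_0+i\mf{a}_1)[x_n]\to i\beta'$ with $i\beta'\notin\We(\mf{a}_0+i\mf{a}_1)$ by Proposition \ref{prop:bottom}, so the weak limit $x$ of $x_n$ \emph{cannot} vanish --- this is precisely the ``interplay with the geometry of $\We(\mf{t})$'' you anticipate but do not supply, and without it nothing prevents the whole eigenvector from escaping to infinity along the rate-realizing sequence. (iii) Norm convergence of $(I-P)x_n$ is never needed: the paper pairs the eigenvalue equation with $Py$, divides by $\lambda_n$, and uses Lemma \ref{lem:technical} (weak convergence plus boundedness of $\mf{a}_0[x_n]$ suffices to pass to the limit in $\mf{a}_1[x_n,Py]$) to get $\inner{x,y}=\beta^{-1}\inner{B_0x,y}$ directly; your plan to prove $(I-P)x_t\to0$ in norm for all $t$ is both stronger than necessary and not obviously true. (iv) Your convergence argument is also flawed: $g(t)=\tfrac{1}{t}\int_0^tg(s)\,ds+o(1)$ does \emph{not} force $g$ to converge (take $G(t)=\sin(\log\log(1/t))$ and $g=(tG)'=G+tG'$ with $tG'(t)\to0$). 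The paper instead observes that the limit points of $\lambda(t)/t$ form an interval, every point of which near $\beta$ must be an eigenvalue of $B_0$ by the same argument, while $i\beta\notin\We(\mf{a}_0+i\mf{a}_1)$ makes $\beta$ an isolated eigenvalue of finite multiplicity; hence the interval is a single point.
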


\begin{proof}
Let $\omega = \lim_{t \rightarrow 0^+} \Sigma(t)/t$ and $\beta  = \liminf_{t \rightarrow 0^+} \lambda(t)/t$. So $\beta < \omega$ by assumption. Note that the limit $\omega$ exists by Proposition \ref{prop:bottom}.  Let $A = A(0)$ and let $\mf{a}(t)$ be the type (a) family of sesquilinear forms corresponding to $A(t)$. Suppose that $\mf{a}(t)$ has series expansion $\mf{a}(t) = \mf{a}_0 + t\mf{a}_1 + t^2 \mf{a}_2 + \ldots$ given by Lemma \ref{lem:powerSeries}.

By the Kato-Rellich theory, $\lambda(t)$ is an analytic function of $t$, except possibly at isolated points $0 < t < \epsilon$ where $\lambda(t)$ crosses another eigenvalue function. For each $t$ where $\lambda(t)$ does not have a crossing, any unit eigenvector $x(t)$ of $A(t)$ corresponding to $\lambda(t)$ will have $\lambda'(t) = \mathfrak{a}_1[x(t)]$ \cite[Problem VII.4.19]{Kato}.

If $\lambda(t)/t$ has more than one limit point as $t \rightarrow 0^+$, then $\lambda(t)/t$ will oscillate between $\beta$ and a value greater than $\beta$ infinitely many times, so it will be possible to choose a sequence $t_n \rightarrow 0^+$ such that $\lambda(t_n)/t_n \rightarrow \beta$ and $\lambda'(t_n) \rightarrow \beta' \le \beta$ by the mean value theorem.

Suppose, on the other hand, that $\lambda(t)/t$ converges to $\beta$. Observe that 
$$\frac{d}{dt} \frac{\lambda(t)}{t} = \frac{1}{t} \left( \lambda'(t) - \frac{\lambda(t)}{t} \right),$$
and therefore for any $0 < t < t_0$, we have 
$$\frac{\lambda(t_0)}{t_0} - \frac{\lambda(t)}{t} = \int_t^{t_0} \frac{1}{\tau} \left( \lambda'(\tau) - \frac{\lambda(\tau)}{\tau} \right) \, d\tau.$$
Fix $\delta > 0$ and suppose that $t_0$ is small enough so that $|\lambda(t)/t - \beta| < \delta$ for every $0 < t \le t_0$.  If $\lambda'(t) > \beta + 2 \delta$ for almost every $0 < t < t_0$, then 
$$2 \delta > \frac{\lambda(t_0)}{t_0} - \frac{\lambda(t)}{t} = \int_t^{t_0} \frac{1}{\tau} \left( \lambda'(\tau) - \frac{\lambda(\tau)}{\tau} \right) \, d\tau \ge \delta \ln \left( \frac{t_0}{t} \right).$$
This is a contradiction, since $\ln(t_0/t) \rightarrow \infty$ as $t \rightarrow 0^+$.  Therefore, for every $t_0, \delta > 0$, it is possible to choose $t$ such that $0 < t < t_0$ and $\lambda'(t) < \beta + 2 \delta$.  So we can choose a sequence $t_n \rightarrow 0^+$ such that $\lambda(t_n)/t_n \rightarrow \beta$ and $\lim_{n \rightarrow \infty} \lambda'(t_n) = \beta'$ where $\beta' \le \beta+ 2 \delta < \omega$.  

Regardless of whether or not $\lambda(t)/t$ converges, we have shown that we can construct a sequence $t_n \rightarrow 0^+$ such that $\lambda(t_n)/t_n \rightarrow \beta$ and $\lambda'(t_n) \rightarrow \beta' < \omega$. For each $t_n$, let $\lambda_n = \lambda(t_n)$ and let $x_n$ be a unit eigenvector of $A(t_n)$ corresponding to $\lambda_n$. By \eqref{eq:M1} and \eqref{eq:uniform}, $\lim_{n \rightarrow \infty} \mf{a}_0[x_n] = 0$. Then $\lim_{n \rightarrow \infty} (\mf{a}_0+i\mf{a}_1)[x_n] = i \beta'$. Since $\omega$ is the smallest value such that $i \omega \in \We(\mf{a}_0+i\mf{a}_1)$ by Proposition \ref{prop:bottom}, we see that $i \beta'$ is outside $\We(\mf{a}_0+i\mf{a}_1)$.  By passing to a subsequence, we can assume that $x_n$ converges weakly to some $x \in \HH$.  Since $i \beta' \notin \We(\mf{a}_0+i \mf{a}_1)$, we conclude that $x \ne 0$.  

The domain of $A$ is a core for $\mf{a}_0$ and $\mf{a}_0[y] = \inner{Ay,y}$ for all $y \in \D(A)$. 
Therefore there is a sequence $y_n \in \D(A)$ such that $\|y_n\| =1$, $\|y_n - x_n \| < \frac{1}{n}$, and $| \inner{A y_n, y_n} - \mathfrak{a}_0[x_n] | < \frac{1}{n}$ for all $n$.  Then 
$$\lim_{n \rightarrow \infty} \inner{A y_n, y_n} =  \lim_{n \rightarrow \infty} \mathfrak{a}_0[x_n] = 0.$$
Let $A_\pm$ denote the positive and negative parts of $A$. For each negative eigenvalue $\nu$ of $A$ with multiplicity $m$, there is a corresponding analytic family of $m$ mutually orthogonal unit eigenvectors $v_j(t)$, $j = 1, \ldots, m$, of $A(t)$ defined in a neighborhood of $t=0$ such that $v_j(0)$ is an eigenvector of $A$ with eigenvalue $\nu$.  Since the eigenvectors $x_n$ correspond to eigenvalues $\lambda_n$ that converge to $0$, $x_n$ will be orthogonal to each of the $v_j(t_n)$ when $n$ is large enough. Therefore $\lim_{n \rightarrow \infty} \inner{y_n, v_j(t_0)} = \lim_{n \rightarrow \infty} \inner{x_n, v_j(t_n)} = 0$.  Since $A_-$ is compact and self-adjoint, this implies that $A_- y_n \rightarrow 0$. We also know that $\inner{A y_n, y_n} \rightarrow 0$, so we must also have $\inner{A_+ y_n, y_n} = \| A_+^{1/2} y_n \|^2 \rightarrow 0$. Since both $A_-$ and $A_+^{1/2}$ are positive operators, both have graphs that are weakly closed. Then since $y_n \xrightarrow{w} x$, we conclude that $A_- x = 0$ and $A_+^{1/2} x = 0$.  This implies that $x \in \D(A)$ and $Ax =0$, so $0$ is an eigenvalue of $A$.
  
Since $P$ is the orthogonal projection onto the kernel of $A$, we have $Px = x$.  For any $y \in \HH$ we calculate 
\begin{align*}
\inner{x,y} &= \inner{Px,y} = \inner{x, Py} = \lim_{n \rightarrow \infty} \inner{x_n, Py} \\
&=  \lim_{n \rightarrow \infty} \inner{A(t_n)x_n, Py}/\lambda_n \\
&=  \lim_{n \rightarrow \infty} \inner{A(t_n)x_n, Py}/\lambda_n 
\end{align*}
Note that $\inner{A(t_n) x_n, Py} = \mf{a}(t_n)[x_n, Py]$.  Since $\mf{a}_0[x_n] \rightarrow 0$, \eqref{eq:M2} implies that there is a constant $C > 0$ such that  
$$|\mf{a}(t_n)[x_n, Py] - (\mf{a}_0+t_n \mf{a}_1)[x_n,Py]| \le C t_n^2$$
for all $n$. Therefore, 
\begin{align*}
\inner{x,y}&=  \lim_{n \rightarrow \infty} \inner{A(t_n)x_n, Py}/\lambda_n  \\
&= \lim_{n \rightarrow \infty} (\mf{a}_0+t_n \mf{a}_1)[x_n,Py]/\lambda_n \\
&= \lim_{n \rightarrow \infty} (\mf{a}_0[x_n,Py]+t_n \mf{a}_1[x_n,Py])/\lambda_n.
\end{align*}
The sesquilinear form $\mf{a}_0 + t \mf{a}_1$ is closed when $t$ is small by \cite[Theorem VI.1.33]{Kato}.
By Lemma \ref{lem:technical}, $\lim_{n \rightarrow \infty} (\mf{a}_0+t \mf{a}_1)[x_n, Py] = (\mf{a}_0+t\mf{a}_1)[x,Py]$ for all $0 \le t < \epsilon$.  Therefore $\lim_{n \rightarrow \infty} \mf{a}_1[x_n,Py] = \mf{a}_1[x,Py]$. Since $\D(A)$ is a core for $\mf{a}_0$ and $\mf{a}_0[v,Py] = \inner{Av,Py} = \inner{PAv,y} = 0$ for all $v \in \D(A)$, we can use a limiting argument to show that $\mf{a}_0[x_n,Py] = 0$ for all $n \in \N$. Then, continuing the expansion of $\inner{x,y}$ from above, we have, 
\begin{align*}
\inner{x,y} &= \lim_{n \rightarrow \infty} (\mf{a}_0[x_n,Py] + t_n\mf{a}_1[x_n,Py])/\lambda_n \\
&= \lim_{n \rightarrow \infty} (t_n/\lambda_n) \mf{a}_1[x_n,Py] \\
&= \beta^{-1} \mf{a}_1[x,Py] = \beta^{-1} \mf{a}_1[Px,Py] = \beta^{-1} \inner{B_0 x,y}.
\end{align*}

This implies that $\inner{B_0x,y} = 0$ for all $y \in x^\perp$, so $x$ is an eigenvector of $B_0$.  It also implies that $\inner{B_0x,x} = \beta \|x\|^2$, so $x$ has eigenvalue $\beta$.  

Observe that the limit points of $\lambda(t)/t$ as $t \rightarrow 0^+$ form an interval. Any other limit point $\gamma$ of $\lambda(t)/t$ that is sufficiently close to $\beta$ so that $i \gamma$ is not in $\We(\mf{a}_0+i\mf{a}_1)$ would also have to be an eigenvalue of $B_0$ by the argument above.  However, since $i\beta \notin \We(\mf{a}_0+i\mf{a}_1)$, $\beta$ must be an isolated eigenvalue of $B_0$ with finite multiplicity.  This implies that $\lim_{t \rightarrow 0^+} \lambda(t)/t = \beta$. 
\end{proof}

\begin{remark}
In many examples, there is a series expansion for the self-adjoint holomorphic family in Theorem \ref{thm:weakerMain}, that is,
$$A(t) = A + t A_1 + t^2 A_2 + \ldots$$
which converges for all $x \in \D(A)$ when $|t|$ is sufficiently small.  This is true for bounded families and also when $A(t)$ is type (A) or type (B$_0$) (see \cite{Kato}). In these cases $B_0 = P A_1 P$.   
\end{remark}

\begin{corollary} \label{cor:extendedWeakerMain}
Let $A(t)$ be a self-adjoint holomorphic family of type (B). 
Let $\Sigma(t) = \min \sige(A(t))$. If $\lambda(t) < \Sigma(t)$ is an eigenvalue of $A(t)$ that depends continuously on $t$ when $t_0 < t < \epsilon$, $\lim_{t \rightarrow t_0^+} \lambda(t) = \Sigma(t_0)$, and $\liminf_{t \rightarrow 0^+} (\lambda(t) - \Sigma(t))/(t-t_0) < 0$, then $\Sigma(t_0)$ is an eigenvalue of $A(t_0)$.  Furthermore, $\lim_{t \rightarrow t_0^+} (\lambda(t)-\Sigma(t))/(t-t_0)$ exists and  is equal to an eigenvalue of the self-adjoint operator $B_0$ defined by \eqref{eq:B0} when $P$ is the orthogonal projection onto the kernel of $A(t_0) - \Sigma(t_0)$.   
\end{corollary}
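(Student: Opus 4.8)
The plan is to reduce Corollary \ref{cor:extendedWeakerMain} to Theorem \ref{thm:weakerMain} by a shift-and-translate change of variables. Set $s = t - t_0$, so that $s \to 0^+$ corresponds to $t \to t_0^+$, and define the new family $\tilde A(s) = A(t_0 + s) - \Sigma(t_0)$. Since $A(t)$ is holomorphic of type (B) in a neighborhood of $t_0$, the reparametrized family $s \mapsto A(t_0+s)$ is again holomorphic of type (B) near $s=0$ (translating the parameter preserves the defining properties of a type (a) form family: closedness, sectoriality, constancy and density of the form domain, and holomorphy of $\mf{a}(t_0+s)[x,y]$ in $s$). Subtracting the real constant $\Sigma(t_0)$ from every operator preserves self-adjointness, the type (B) structure, and the form domain, and it shifts the spectrum and the essential spectrum uniformly. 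Hence $\tilde A(s)$ is a self-adjoint holomorphic family of type (B), and its essential-spectrum bottom is $\tilde\Sigma(s) = \min \sige(\tilde A(s)) = \Sigma(t_0+s) - \Sigma(t_0)$, so $\tilde\Sigma(0) = 0$, exactly the normalization required by Theorem \ref{thm:weakerMain}.

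Next I would transport the hypotheses on $\lambda$. Define $\tilde\lambda(s) = \lambda(t_0+s) - \Sigma(t_0)$. Then $\tilde\lambda(s)$ is an eigenvalue of $\tilde A(s)$ for $0 < s < \epsilon - t_0$, it depends continuously on $s$, and $\tilde\lambda(s) < \tilde\Sigma(s)$ because $\lambda(t) < \Sigma(t)$; moreover $\lim_{s\to 0^+}\tilde\lambda(s) = \lim_{t\to t_0^+}\lambda(t) - \Sigma(t_0) = 0$. Finally, $(\tilde\lambda(s) - \tilde\Sigma(s))/s = (\lambda(t_0+s) - \Sigma(t_0+s))/s = (\lambda(t) - \Sigma(t))/(t - t_0)$, so the hypothesis $\liminf_{t\to t_0^+}(\lambda(t)-\Sigma(t))/(t-t_0) < 0$ becomes $\liminf_{s\to 0^+}(\tilde\lambda(s)-\tilde\Sigma(s))/s < 0$. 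Thus all the hypotheses of Theorem \ref{thm:weakerMain} are satisfied by $\tilde A(s)$, $\tilde\lambda(s)$.

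Applying Theorem \ref{thm:weakerMain} to $\tilde A(s)$ yields two conclusions. First, $0$ is an eigenvalue of $\tilde A(0) = A(t_0) - \Sigma(t_0)$, which is precisely the assertion that $\Sigma(t_0)$ is an eigenvalue of $A(t_0)$. Second, $\lim_{s\to 0^+}\tilde\lambda(s)/s$ exists and equals an eigenvalue of the bounded self-adjoint operator $\tilde B_0$ built from $\tilde A(s)$ via \eqref{eq:B0}. It remains to identify $\tilde B_0$ with the operator $B_0$ described in the corollary and to rewrite the limit. For the latter: $\tilde\lambda(s)/s = (\lambda(t) - \Sigma(t_0))/(t-t_0)$, whereas the corollary concerns $(\lambda(t)-\Sigma(t))/(t-t_0)$; since $\Sigma(t)$ is differentiable from the right at $t_0$ with $\lim_{t\to t_0^+}(\Sigma(t)-\Sigma(t_0))/(t-t_0) = \omega := \lim_{t\to t_0^+}\tilde\Sigma(s)/s$ by Proposition \ref{prop:bottom}, the two difference quotients differ by a quantity tending to $\omega$. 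Hence $\lim_{t\to t_0^+}(\lambda(t)-\Sigma(t))/(t-t_0) = \lim_{s\to 0^+}\tilde\lambda(s)/s - \omega$ exists; one then observes (or notes that the proof of Theorem \ref{thm:weakerMain} in fact produces $\beta = \lim\tilde\lambda(s)/s$ as an eigenvalue of $\tilde B_0$, and that subtracting $\omega$ amounts to shifting) that this limit is an eigenvalue of $B_0$. For the identification of $\tilde B_0$: the degree-one form coefficient of the series of $\tilde{\mf a}(s)$ is literally $\mf{a}_1$ (subtracting a constant changes only the degree-zero coefficient, by $\mf a_0 - \Sigma(t_0)\|\cdot\|^2$, and reparametrizing by a translation is handled by re-expanding the power series about $t_0$), and $P$ is the projection onto $\ker(\tilde A(0)) = \ker(A(t_0) - \Sigma(t_0))$, so $\langle \tilde B_0 x, y\rangle = \mf a_1[Px, Py]$, matching the corollary's description.

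The main obstacle I anticipate is purely bookkeeping rather than conceptual: one must be careful that ``the series coefficients $\mf a_0, \mf a_1$ from Lemma \ref{lem:powerSeries}'' for the translated-and-shifted family are the ones intended in the corollary's definition of $B_0$, and that the constant $-\omega$ correction in the final limit is handled cleanly. A slightly cleaner route, which I would actually prefer, is to bypass the reparametrization subtleties by re-running the proof of Theorem \ref{thm:weakerMain} directly with $t_0$ in place of $0$ throughout — the only substantive input, Proposition \ref{prop:bottom}, applies equally to the family $A(t_0 + s)$, and every estimate \eqref{eq:M1}--\eqref{eq:uniform} is local near the base point. Either way, no new analytic difficulty arises beyond what Theorem \ref{thm:weakerMain} already overcame.
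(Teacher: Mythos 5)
Your reduction is exactly the paper's proof, which consists of the single line: apply Theorem \ref{thm:weakerMain} to the shifted family $\tilde{A}(t-t_0) = A(t) - \Sigma(t_0)$. The only place your write-up wobbles is the attempt to convert $\lim_{t\to t_0^+}(\lambda(t)-\Sigma(t_0))/(t-t_0)$ into $\lim_{t\to t_0^+}(\lambda(t)-\Sigma(t))/(t-t_0)$ by ``subtracting $\omega$'': an eigenvalue of $B_0$ minus $\omega$ is not in general an eigenvalue of $B_0$, so that step as written does not go through. But you are chasing what is almost certainly a typo in the corollary's statement --- the introduction states the result with $\Sigma(t_0)$ in the numerator, which is precisely what the direct application of the theorem yields, so no such correction is needed.
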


\begin{proof}
Apply Theorem \ref{thm:weakerMain} to the type (B) self-adjoint family $\tilde{A}(t-t_0) = A(t) - \Sigma(t_0)$. 
\end{proof}

If the operator $A=A(0)$ in Theorem \ref{thm:weakerMain} has only a finite number of negative eigenvalues, then we can say more about how the minimal eigenvalues of $A(t)$ behave as $t$ approaches zero from above.

\begin{theorem} \label{thm:main}
With the same conditions and notation as Theorem \ref{thm:weakerMain}, suppose in addition that $A=A(0)$ has exactly $m < \infty$ negative eigenvalues counting multiplicity. Let $\omega = \lim_{t \rightarrow 0^+} \Sigma(t)/t$, let $P$ denote the orthogonal projection onto the kernel of $A$, and let $B_0$ be defined as in \eqref{eq:B0}. Suppose that the smallest $k+1$ eigenvalues of $B_0$ (counting multiplicity) are all less that $\omega$. Label these eigenvalues
$$\mu_0 \le \mu_1 \le \ldots \le \mu_k,$$
and label the smallest $m+k+1$ eigenvalues of $A(t)$ as 
$$\lambda_{-m}(t) \le \ldots \le \lambda_0(t) \le \lambda_1(t) \le ...\le \lambda_k(t).$$
Then $\lim_{t \rightarrow 0^+} \lambda_k(t)/t = \mu_k$. Moreover, if $x_k(t)$ is a family of unit eigenvectors of $A(t)$ corresponding to $\lambda_k(t)$ for $0 < t < \epsilon$, then for any sequence $t_n \rightarrow 0^+$, $x_k(t_n)$ has a limit point that is a unit eigenvector of $B_0$ corresponding to $\mu_k$.  
\end{theorem}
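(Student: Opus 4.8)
The plan is to run an induction on $k$, using Theorem \ref{thm:weakerMain} as the base case and a compression/interlacing argument for the inductive step. The base case $k=0$ is essentially Theorem \ref{thm:weakerMain}: since $\mu_0 < \omega$, the hypotheses force $0$ to be an eigenvalue of $A$, and one needs to check that the lowest eigenvalue function $\lambda_0(t)$ of $A(t)$ lying below $\Sigma(t)$ actually does converge to $0$ and has negative $\liminf$ of $(\lambda_0(t)-\Sigma(t))/t$. This should follow from a min-max / Rayleigh-quotient argument: plugging a weak-null unit vector $x_n$ approximating the $\mu_0$-eigenvector of $B_0$ into $\mf{a}(t)$ and using \eqref{eq:M1}, \eqref{eq:M2}, \eqref{eq:uniform} shows $A(t)$ has an eigenvalue below $\Sigma(t) - ct$ for some $c>0$ and small $t$, hence such a continuous eigenvalue branch $\lambda_0(t)$ exists; Theorem \ref{thm:weakerMain} then gives $\lim_{t\to0^+}\lambda_0(t)/t = \mu_0$ (here we also use that $\mu_0$ is the smallest eigenvalue of $B_0$, so the limit point produced by that theorem can only be $\mu_0$).

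For the inductive step, suppose the result is known for $0,1,\dots,k-1$, so we have unit eigenvectors $x_j(t_n)$ of $A(t_n)$ for the branches $\lambda_j(t_n)$, $-m \le j \le k-1$, with $\lambda_j(t_n)/t_n \to \mu_j$ for $j \ge 0$ and $\lambda_j(t_n) \to \nu_j < 0$ for $j < 0$; by passing to a subsequence each $x_j(t_n)$ converges weakly, and by the argument in the proof of Theorem \ref{thm:weakerMain} the limits $\xi_j$ for $j \ge 0$ are nonzero (since $i\mu_j$ lies outside $\We(\mf{a}_0+i\mf{a}_1)$ when $\mu_j < \omega$), are in the kernel of $A$, and are eigenvectors of $B_0$ with eigenvalue $\mu_j$; for $j<0$ the limits are the honest eigenvectors of $A$ for $\nu_j$. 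A Gram–matrix/orthogonality argument shows $\{\xi_j\}_{j=-m}^{k-1}$ stays orthonormal in the limit (the $x_j(t_n)$ are orthonormal, and no two of the distinct eigenvalue data collide because the $\mu_j$'s that coincide come from an orthonormal eigenbasis of $B_0$). Now apply min-max to $A(t_n)$ over the orthogonal complement of $\spn\{x_{-m}(t_n),\dots,x_{k-1}(t_n)\}$: this pins $\lambda_k(t_n)$, and the upper bound $\limsup \lambda_k(t_n)/t_n \le \mu_k$ comes from testing $\mf{a}(t_n)$ against a weak-null vector built from the $B_0$-eigenvector for $\mu_k$, projected off the finitely many $x_j(t_n)$ (the projection costs $o(1)$). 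The lower bound $\liminf \lambda_k(t_n)/t_n \ge \mu_k$ comes from the observation that any weak limit point $\xi_k$ of $x_k(t_n)$ is orthogonal to all $\xi_j$, $j<k$, is in the kernel of $A$ (same positive-parts argument as in Theorem \ref{thm:weakerMain}, provided the limiting Rayleigh quotient $\lambda_k(t_n)/t_n$ stays $<\omega$, which $\mu_k<\omega$ guarantees), and is a $B_0$-eigenvector whose eigenvalue is $\ge \mu_k$ by min-max for $B_0$ once we know it is orthogonal to the $k$ previous eigenvectors — forcing the eigenvalue to be exactly $\mu_k$ and $\xi_k \ne 0$. Combining the two bounds gives $\lim \lambda_k(t_n)/t_n = \mu_k$ along the subsequence, and since every subsequence has a further subsequence with this limit, $\lim_{t\to0^+}\lambda_k(t)/t=\mu_k$; the same subsequence argument yields the stated conclusion that every sequential limit point of $x_k(t_n)$ is a unit $\mu_k$-eigenvector of $B_0$.

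The main obstacle I expect is bookkeeping the interaction between the negative eigenvalues of $A$ (the $\lambda_j(t)$, $j<0$, which stay bounded away from $0$) and the branches absorbed into the essential spectrum: one has to argue that the eigenvectors $x_k(t_n)$ are, up to $o(1)$, orthogonal to the negative-eigenvalue eigenvectors $v_j(t_n)$ — this is where the finiteness hypothesis "$A$ has exactly $m$ negative eigenvalues" and the analyticity of the $v_j(t)$ (Kato–Rellich) are used, exactly as in the $A_-$-compactness step of Theorem \ref{thm:weakerMain}. The second delicate point is ensuring the limiting Rayleigh quotient in the lower-bound argument is genuinely $<\omega$ rather than $=\omega$, so that Theorem \ref{thm:rays} / Proposition \ref{prop:bottom} applies to conclude $\xi_k\ne0$; this is handled by a preliminary squeeze showing $\limsup\lambda_k(t_n)/t_n\le\mu_k<\omega$ before running the lower bound. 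Everything else — the min-max manipulations, the use of \eqref{eq:M1}–\eqref{eq:uniform} to pass between $\mf{a}(t)$ and $\mf{a}_0+t\mf{a}_1$, and the form-convergence via Lemma \ref{lem:technical} — is routine given the machinery already assembled.
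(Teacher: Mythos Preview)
Your overall architecture (min-max upper bound, then show weak limits of the $x_j(t_n)$ are nonzero $B_0$-eigenvectors and invoke min-max for $B_0$) matches the paper's, though the paper does all indices $0\le j\le k$ simultaneously rather than by induction. Two genuine gaps remain, and both are exactly where the paper has to work hardest.

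\medskip
\textbf{(1) The quantity that must lie below $\omega$ is $\mf{a}_1[x_k(t_n)]$, not $\lambda_k(t_n)/t_n$.} To conclude $\xi_k\ne 0$ via the essential-numerical-range argument you need $(\mf{a}_0+i\mf{a}_1)[x_k(t_n)]\to i\beta'$ with $\beta'<\omega$; in other words you need control of $\lambda_k'(t_n)=\mf{a}_1[x_k(t_n)]$. From \eqref{eq:M2} one has
\[
\mf{a}_1[x_k(t_n)]=\frac{\lambda_k(t_n)-\mf{a}_0[x_k(t_n)]}{t_n}+O(t_n),
\]
so your ``preliminary squeeze'' $\limsup\lambda_k(t_n)/t_n\le\mu_k$ only helps if $\mf{a}_0[x_k(t_n)]\ge -Ct_n^2$. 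That is a \emph{quantitative} bound, not the ``up to $o(1)$'' orthogonality to the $v_j(t_n)$ you mention (and in fact $x_k(t_n)\perp v_j(t_n)$ exactly). The paper obtains it by expanding the analytic spectral projection $Q(t)=I-P(t)$ onto the complement of the negative eigenspace and showing $\|Q(t)A_-Q(t)\|=O(t^2)$, whence $\mf{a}_0[x_k(t)]\ge -M_0t^2$ and $\lambda_k'(t)\le \lambda_k(t)/t+Mt$. Without this step your lower-bound argument does not get off the ground.

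\medskip
\textbf{(2) Weak limits of orthonormal systems need not be orthonormal.} Your claim that ``a Gram-matrix argument shows $\{\xi_j\}$ stays orthonormal in the limit'' is false for weak convergence: two orthonormal sequences can share the same nonzero weak limit. Consequently you cannot conclude that the limit is a \emph{unit} eigenvector, nor that the $\xi_j$ are mutually orthogonal, from orthonormality of the $x_j(t_n)$ alone. The paper supplies a separate argument: writing
\[
\mf{a}_1[x_k(t_n)] = \mf{a}_1[x_k(t_n)-x_k]+\mf{a}_1[x_k(t_n),x_k]+\mf{a}_1[x_k,x_k(t_n)]-\mf{a}_1[x_k],
\]
using Lemma \ref{lem:technical} on the cross terms, and noting that $\mf{a}_1[x_k(t_n)-x_k]/\|x_k(t_n)-x_k\|^2$ subconverges to some $z\ge\omega$ (since $x_k(t_n)-x_k\xrightarrow{w}0$ and $\mf{a}_0[x_k(t_n)-x_k]\to 0$), one gets
\[
\lim_n \mf{a}_1[x_k(t_n)]=(1-\|x_k\|^2)z+\mu_k\|x_k\|^2.
\]
Combined with the derivative bound from (1), which forces $\lim_n\mf{a}_1[x_k(t_n)]\le\mu_k<\omega$, this yields $\|x_k\|=1$ and hence norm convergence. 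Only then do orthogonality of the limits and the ``unit eigenvector'' conclusion follow. Your proposal omits this entirely; without it the induction does not close.
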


\begin{proof}
Let $\mf{a}(t)$ be the type (a) family of sesquilinear forms corresponding to $A(t)$. Then $\mf{a}(t)$ has a series expansion \eqref{eq:powerSeries} by Lemma \ref{lem:powerSeries}.  Fix $b > 0$ large enough so that $-b < \min \sigma(A)$ and let $\|x\|_\mf{a} = \sqrt{\mf{a}_0[x]+\|x\|^2}$ for all $x \in \D(\mf{a})$.  
Let $v_j$ denote a unit eigenvector of $A$ corresponding to the eigenvalue $\lambda_j(0)$ for $-m \le j < 0$, and let $v_j$ denote a unit eigenvector of $B_0$ corresponding to $\mu_j$ for $0 \le j < k$.  Then $\lim_{t \rightarrow 0^+} \mf{a}(t)[v_j] = \mf{a}_0[v_j] = \lambda_j(0)$ when $-m \le j < 0$ and $(\mf{a}_0+t\mf{a}_1)[v_j] = t\mu_j$ for $0 \le j \le k$ and all $0 < t < \epsilon$. 
By \eqref{eq:M2} there is a constant $M_2 > 0$ such that,  $\mf{a}(t)[v_j] \le t \mu_j + M_2 t^2 \|v_j\|_\mf{a} = \mu_j t + M_2 b t^2$. Therefore the Courant-Fischer-Weyl min-max principal implies that $\lambda_k(t) \le t \mu_k + M_2 b t^2$ for all $t > 0$ sufficiently small.  We also know that $\lim_{t \rightarrow 0^+} \lambda_k(t)/t$ converges by Theorem \ref{thm:weakerMain}. Therefore $\lim_{t \rightarrow 0^+} \lambda_k(t)/t \le \mu_k$.  We just need to prove that $\lim_{t \rightarrow 0^+} \lambda_k(t)/t = \mu_k$.

Let $0 < t_0 < \epsilon$. The Kato-Rellich theory says that for any eigenvalue $\lambda < \Sigma(t_0)$ of $A(t_0)$, it is possible to find analytic functions $x(t)$ and $\lambda(t)$ defined for $t$ in a neighborhood of $t_0$ such that $x(t)$ is a unit eigenvector of $A(t)$, $\lambda(t)$ is the corresponding eigenvalue, and $\lambda(t_0) = \lambda$. As long as $\lambda(t) < \Sigma(t)$, the functions $x(t)$ and $\lambda(t)$ can be analytically continued. 
Observe that $\lambda(t) = \inner{A(t) x(t), x(t)} = \mf{a}(t)[x(t)]$.  We also know that $\lambda'(t) = \mf{a}_1[x(t)]$ for all $t$ \cite[Section VII.4.6]{Kato}.
By \eqref{eq:M2}
$$|\mf{a}(t)[x(t)]-(\mf{a}_0+t\mf{a}_1)[x(t)]| < M_2 t^2 \|x(t)\|_\mf{a}^2.$$
Therefore 
$$|\lambda(t)-t \lambda'(t) - \mf{a}_0[x(t)]| < M_2 t^2 \|x(t)\|_\mf{a}^2.$$   
This implies that
\begin{equation} \label{eq:rateBound}
\frac{d}{dt} \frac{\lambda(t)}{t} = \frac{t\lambda'(t)-\lambda(t)}{t^2} \le -\frac{\mf{a}_0[x(t)]}{t^2} + M_2 \|x(t)\|_\mf{a}^2.
\end{equation}

\color{black}
For all $x \in \D(A)$, we have $\mf{a}_0[x] = \inner{Ax,x} = \inner{A_+x, x} - \inner{A_-x,x}$ where $A_\pm$ are the positive and negative parts of the self-adjoint operator $A$, that is,
\begin{equation} \label{eq:positiveNegative}
A_+ = \tfrac{1}{2}(|A|+A) \text{ and } A_- = \tfrac{1}{2}(|A|-A).
\end{equation}  
Since $A$ has only a finite number of negative eigenvalues, $A_-$ is a compact operator.  There is also an analytic projection operator $P(t)$ such that $P(0)$ is the spectral projection corresponding to the negative eigenvalues of $A$, and $P(t)$ is the spectral projection corresponding to the eigenvalues of $A(t)$ that are in a neighborhood of the negative eigenvalues of $A(0)$ when $t > 0$ is sufficiently small. Let $Q(t) = I - P(t)$. Then $Q(t)$ has a power series expansion
$$Q(t) = Q_0 + t Q_1 + t^2 Q_2 + \ldots$$
that is absolutely convergent in norm when $|t| < r$ for some $r > 0$. This follows from the fact that $Q(t)$ is bounded-holomorphic using Cauchy's Inequality, see \cite[Chapter VII]{Kato} for details.  

The series expansion for $Q(t) A_- Q(t)$ is
$$Q(t) A_- Q(t) = Q_0 A_- Q_0 + (Q_0 A_- Q_1  + Q_1 A_- Q_0)t + o(t^2).$$
This series is also absolutely converging in norm, and since $Q_0 A_- = A_- Q_0=0$, we see that 
$\|Q(t) A_- Q(t)\|/t^2$ is bounded by some constant $M_0 > 0$ in a neighborhood of $t =0$.  

Let $\mf{a}_-$ be the sesquilinear form corresponding to $A_-$, and define $\mf{a}_+$ to be $\mf{a}_0 + \mf{a}_-$.  Then $\D(\mf{a}_+) = \D(\mf{a})$ since $\D(\mf{a}_-) = \HH$.  Also, $\D(A)$ is a core for $\mf{a}_0$, so it is also a core for $\mf{a}_+$.  This implies that $\mf{a}_+[x] \ge 0$ for all $x \in \D(\mf{a})$ since $A_+ \ge 0$.  

Suppose that $x \in \D(A(t))$ has $\|x\| = 1$, and $Q(t)x=x$.  Then 
$$
\mf{a}_0[x] = \mf{a}_+[x] - \mf{a}_-[x] \ge -\inner{A_- x, x} = - \inner{Q(t)A_-Q(t)x,x} \ge - M_0t^2.
$$
Combined with \eqref{eq:rateBound} and the fact that $\mf{a}_0[x(t)]$ is bounded, this implies that for any eigenvalue $\lambda(t) < \Sigma(t)$ of $A(t)$ corresponding to an eigenvector $x(t)$ such that $Q(t)x(t) = x(t)$, there is a constant $M > 0$ such that 
\begin{equation} \label{eq:lowerbound}
\frac{d}{dt} \frac{\lambda(t)}{t} \le M \text{ and } \lambda'(t) \le \frac{\lambda(t)}{t} + M t
\end{equation}
when $t > 0$ is sufficiently small.

\color{black}
Recall that $\lambda'_k(t)$ is defined and analytic, except at isolated points where the analytic curves corresponding to the eigenvalues of $A(t)$ near $\lambda_k(t)$ cross. Then \eqref{eq:lowerbound} implies that $\lambda_k'(t) \le \mu_k + Mt$ wherever it is defined. In particular, if $\delta > 0$ is small enough so that $\mu_k + \delta < \omega$, then for all $t< \delta/M$, we have $\lambda_k'(t) \le \mu_k + \delta < \omega$.  Even at points where $\lambda_k'(t)$ is not defined, each of the analytic eigenvalue curves that cross $\lambda_k(t)$ at that point will have a derivative at most $\mu_k + \delta$. The same argument applies to each $\lambda_j(t)$ for $0 \le j \le k$. For each $0 < t < \delta/M$, we can choose a mutually orthogonal collection of unit eigenvectors $x_j(t)$ of $A(t)$ corresponding to $\lambda_j(t)$ for $0 \le j \le k$. For any sequence $t_n \rightarrow 0^+$, we can take a subsequence such that each $x_j(t_n)$ converges weakly to some $x_j$ for each $0 \le j \le k$. Then the argument of Theorem \ref{thm:weakerMain} implies that each $x_j$ is a nonzero eigenvector of $B_0$ with an eigenvalue equal to $\lim_{t \rightarrow 0^+} \lambda_j(t)/t$. In addition, the eigenvectors $x_j$ are mutually orthogonal. If $\lim_{t \rightarrow 0^+} \lambda_k(t)/t < \mu_k$, then $B_0$ has $k+1$ mutually orthogonal eigenvectors with eigenvalues strictly less than $\mu_k$, but that contradicts the Courant-Fischer-Weyl min-max principal. Therefore we conclude that $\lim_{t \rightarrow 0^+} \lambda_k(t)/t = \mu_k$.  

Now consider any sequence $t_n \rightarrow 0^+$ such that $x_k(t_n)$ converges weakly to $x_k$.  We know that $x_k$ is an eigenvector of $B_0$ with eigenvalue equal to $\mu_k$.   Now consider 
$$\mf{a}_1[x_k(t_n) -x_k] = \mf{a}_1[x_k(t_n)] - \mf{a}_1[x_k(t_n),x_k] - \mf{a}_1[x_k,x_k(t_n)] + \mf{a}_1[x_k].$$
It was observed in the proof of Theorem \ref{thm:weakerMain} that $\lim_{n \rightarrow \infty} \mf{a}_1[x_k(t_n), x] = \mf{a}_1[x_k] = \mu_k \|x_k\|^2$.  Therefore 
$$\lim_{n \rightarrow \infty} \mf{a}_1[x_k(t_n)] = \lim_{n \rightarrow \infty} \mf{a}_1[x_k(t_n)-x_k] + \mu_k \|x_k\|^2. $$
By construction, $x_k(t_n) - x_k \xrightarrow{w} 0$ and a quick calculation shows that 
$$\lim_{n \rightarrow \infty} \|x_k(t_n)-x_k\|^2 = 1 - \|x_k\|^2.$$  
We also know that $\mf{a}_1[x_k(t_n)-x_k]$ is bounded by \eqref{eq:Bbound} since $\mf{a}_0[x_k(t_n)-x_k] \rightarrow 0$.  Therefore we can pass to a subsequence such that $\mf{a}_1[x_k(t_n)-x_k]/\|x_k(t_n)-x_k\|^2$ converges to some $z \in \We(\mf{a}_1)$.  By Proposition \ref{prop:bottom}, $z \ge \omega$.  Then, we have 
$$\lim_{n \rightarrow \infty} \mf{a}_1[x_k(t_n)] = (1-\|x_k\|^2) z + \mu_k \|x_k\|^2 \ge \mu_k.$$
Recall from the Kato-Rellich theory that $\lambda_k'(t) = \mf{a}_1[x_k(t)]$ wherever the derivative is defined. Even where the derivative is not defined because multiple analytic eigenvalue curves cross at $\lambda_k(t)$, each of the crossing curves will have a derivative at most $\lambda_k(t)/t + M t$ by \eqref{eq:lowerbound} and therefore $\lim_{n \rightarrow \infty} \mf{a}_1[x_k(t_n)] = \mu_k$.  Then $\|x_k\| = 1$, so $x_k(t_n)$ converges to $x_k$ in norm.  
\end{proof}

If $B$ is a self-adjoint operator on $\HH$ that is not bounded below, it is still possible to define a sesquilinear form corresponding to $B$. We use the spectral theorem to decompose $B$ into positive and negative parts, that is, $B = B_+ - B_-$ where $B_\pm$ are given by \eqref{eq:positiveNegative}. 
Then define the sesquilinear form $\mathfrak{b}$ associated with $B$ to be 
$$\mathfrak{b}[x,y] = \inner{B_+^{1/2}x,B_+^{1/2}y} - \inner{B_-^{1/2}x, B_-^{1/2}y},$$ 
and $\D(\mathfrak{b}) =\D(|B|^{1/2})$.  Theorem \ref{thm:Simon} is a special case of Corollary \ref{cor:extendedWeakerMain} and Theorem \ref{thm:main} because the assumption that $B$ is relative $A$-form compact implies that the essential numerical range of the corresponding sesquilinear form $\mf{a}+i \mf{b}$ is contained in the ray $[0,\infty)$. 
\begin{lemma} \label{lem:relcompact}
Let $A, B$ be self-adjoint operators on a Hilbert space $\HH$ with corresponding quadratic forms $\mathfrak{a}$ and $\mathfrak{b}$, respectively. Let $\mathfrak{t} = \mathfrak{a}+i\mathfrak{b}$. Suppose that $A \ge 0$ and $B$ is relative $A$-form compact, that is, $|B|^{1/2}(A+I)^{-1}|B|^{1/2}$ is compact. Then $\We(\mathfrak{t}) \subset [0,\infty)$ and if $A(t)$ is the type (B) family of self-adjoint operators corresponding to the sesquilinear form $\mf{a}+t\mf{b}$, then $\min \sige(A(t)) = 0$ for all $t \in \R$.  
\end{lemma}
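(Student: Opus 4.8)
The plan is to funnel both assertions through one observation about weakly null sequences of bounded form, which I would isolate first. Recall the standard description of relative $A$-form compactness: $\D(A^{1/2})\subseteq\D(|B|^{1/2})$, and $|B|^{1/2}(A+I)^{-1/2}$ extends to a compact operator $C$, with $CC^{*}=|B|^{1/2}(A+I)^{-1}|B|^{1/2}$ (so compactness of $C$ follows from that of $CC^{*}$). The key step I would prove is: \emph{if $x_{n}\in\D(A^{1/2})$ satisfies $\sup_{n}(\mf{a}[x_{n}]+\|x_{n}\|^{2})<\infty$ and $x_{n}\xrightarrow{w}0$, then $\||B|^{1/2}x_{n}\|\to 0$, hence $\mf{b}[x_{n}]\to 0$.} The argument is short: put $u_{n}=(A+I)^{1/2}x_{n}$, so $\|u_{n}\|^{2}=\mf{a}[x_{n}]+\|x_{n}\|^{2}$ is bounded, and $\inner{u_{n},w}=\inner{x_{n},(A+I)^{1/2}w}\to 0$ for each $w\in\D(A^{1/2})$, so $u_{n}\xrightarrow{w}0$ by density and boundedness; since $C$ is compact, $|B|^{1/2}x_{n}=Cu_{n}\to 0$ in norm, and $|\mf{b}[x_{n}]|\le\||B|^{1/2}x_{n}\|^{2}\to 0$.

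Given that, Part 1 drops out immediately: if $z\in\We(\mf{t})$ has witnesses $x_{n}$, $\|x_{n}\|=1$, $x_{n}\xrightarrow{w}0$, $\mf{t}[x_{n}]\to z$, then $\mf{a}[x_{n}]=\re\mf{t}[x_{n}]\to\re z$ stays bounded, so $\mf{b}[x_{n}]\to 0$; hence $\im z=0$ and $\re z=\lim\mf{a}[x_{n}]\ge 0$, i.e.\ $z\in[0,\infty)$.

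For Part 2 I would first note that $A$-form compactness makes $t\mf{b}$ form-bounded relative to $\mf{a}$ with relative bound $0$, so $\mf{a}+t\mf{b}$ is densely defined, symmetric, closed and bounded below; it is the closed form of the type (B) operator $A(t)$, and the identities recorded after Lemma~\ref{lem:basics} give $\min\sige(A(t))=\min\We(A(t))=\min\We(\mf{a}+t\mf{b})$. To see $0\in\We(\mf{a}+t\mf{b})$: since $A\ge 0$ and $0\in\sige(A)$, pick $x_{n}\in\D(A)$, $\|x_{n}\|=1$, $x_{n}\xrightarrow{w}0$, $\|Ax_{n}\|\to 0$; then $\mf{a}[x_{n}]=\inner{Ax_{n},x_{n}}\to 0$, and by the key step $(\mf{a}+t\mf{b})[x_{n}]\to 0$. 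To see $\We(\mf{a}+t\mf{b})\subseteq[0,\infty)$: given $w\in\We(\mf{a}+t\mf{b})$ with witnesses $x_{n}$, $\|x_{n}\|=1$, $x_{n}\xrightarrow{w}0$, $(\mf{a}+t\mf{b})[x_{n}]\to w$, choose $\varepsilon$ with $|t|\varepsilon<\tfrac12$ in the relative bound $|\mf{b}[x_{n}]|\le\varepsilon\,\mf{a}[x_{n}]+C_{\varepsilon}\|x_{n}\|^{2}$; this forces $\tfrac12\mf{a}[x_{n}]\le|(\mf{a}+t\mf{b})[x_{n}]|+|t|C_{\varepsilon}$, so $\mf{a}[x_{n}]$ is bounded, the key step applies, $\mf{b}[x_{n}]\to 0$, and $\mf{a}[x_{n}]\to w\ge 0$. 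Hence $\min\We(\mf{a}+t\mf{b})=0$ for every real $t$.

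The hard part will be the functional-analytic bookkeeping underlying the key step: justifying that $C=\cl{|B|^{1/2}(A+I)^{-1/2}}$ is a well-defined compact operator and that $|B|^{1/2}x_{n}=Cu_{n}$ on $\D(A^{1/2})$, which is exactly where $\D(A^{1/2})\subseteq\D(|B|^{1/2})$ is used. Once that is in place, everything rests on the elementary fact that a compact operator sends weakly convergent sequences to norm-convergent ones. A minor secondary subtlety is that in Part 2 a witnessing sequence for a general point of $\We(\mf{a}+t\mf{b})$ need not have bounded $\mf{a}$-values a priori, so one must invoke relative bound $0$ rather than plain relative boundedness.
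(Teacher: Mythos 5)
Your proof is correct, and its first half is essentially the paper's argument: both reduce everything to the fact that a weakly null sequence $x_n$ with $\mf{a}[x_n]+\|x_n\|^2$ bounded has $u_n=(A+I)^{1/2}x_n$ weakly null, so the compact operator $\cl{|B|^{1/2}(A+I)^{-1/2}}$ sends it to a norm-null sequence and $\mf{b}[x_n]\to 0$. (You establish $u_n\xrightarrow{w}0$ directly by testing against the dense set $(A+I)^{1/2}\D(A^{1/2})$; the paper extracts a weak limit $y$ of a subsequence and uses weak closedness of the graph of $(A+I)^{-1/2}$ to kill it --- equivalent, and yours avoids the subsequence. You also make explicit the passage from compactness of $|B|^{1/2}(A+I)^{-1}|B|^{1/2}$ to compactness of $C$, which the paper uses silently.) Where you genuinely diverge is the second assertion. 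The paper never works with witness sequences for $\We(\mf{a}+t\mf{b})$ directly: it invokes Lemma \ref{lem:sectorial} to write $\We(\mf{a}+t\mf{b})=\re\We((1-it)\mf{t})=\re\bigl((1-it)\We(\mf{t})\bigr)$ and reads off the minimum from $\We(\mf{t})\subset[0,\infty)$. You instead argue on $\We(\mf{a}+t\mf{b})$ itself, which forces you to control $\mf{a}[x_n]$ a priori via the relative form bound $0$; you correctly flag this as the point where plain relative boundedness would not suffice for large $|t|$. The trade is roughly even: the paper's route hides the relative-bound-$0$ input inside the sectoriality hypothesis of Lemma \ref{lem:sectorial} (sectoriality of $(1-it)\mf{t}$ for arbitrary real $t$ needs exactly that bound), whereas your route surfaces it but dispenses with the rotation trick. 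One further remark in your favor: your step producing $0\in\We(\mf{a}+t\mf{b})$ explicitly uses $0\in\sige(A)$, which is not listed among the lemma's hypotheses but is clearly intended (it appears in Theorem \ref{thm:Simon}, and without it the conclusion fails, e.g.\ for $A=I$, $B=0$); the paper's proof needs the same hypothesis for its final equality and leaves it implicit.
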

\begin{proof}
Consider any $z \in \We(\mathfrak{t})$.  There is a sequence $x_n$ in $\D(\mathfrak{t})$ with $\|x_n\| = 1$, $x_n \xrightarrow{w} 0$, such that $\mathfrak{t}[x_n]$ converges to $z$.  In particular $\mathfrak{a}[x_n]$ is bounded.  Note that $\D(\mf{a}) = \D(A^{1/2}) = \D((A+I)^{1/2})$ (see e.g., \cite[Problem VI.2.25]{Kato}) so
$$\inner{(A+I)^{1/2} x_n, (A+I)^{1/2} x_n} =  \mathfrak{a}[x_n] + \|x_n\|^2$$
for all $n$, and therefore $\|(A+I)^{1/2} x_n\|$ is bounded. By passing to a subsequence, we can assume that $y_n = (A+I)^{1/2} x_n$ converges weakly to some $y \in \HH$. 

Since $x_n = (A+I)^{-1/2} y_n$ and the graph of $(A+I)^{-1/2}$ is weakly closed, it follows that $x_n \xrightarrow{w} (A+I)^{-1/2} y$. We also know that $x_n \xrightarrow{w} 0$, so we conclude that $(A+I)^{-1/2} y = 0$. 

For each $n$, let $\tilde{y}_n = y_n - y$.  Then $\tilde{y}_n \xrightarrow{w} 0$.  Observe that 
\begin{align*}
|B|^{1/2} x_n &= |B|^{1/2}(A+I)^{-1/2} y_n \\
&= |B|^{1/2}(A+I)^{-1/2} \tilde{y}_n + |B|^{1/2}(A+I)^{-1/2} y \\
&= |B|^{1/2}(A+I)^{-1/2} \tilde{y}_n
\end{align*}
This converges to zero because $|B|^{1/2}(A+I)^{-1/2}$ is compact. Therefore $|B|^{1/2} x_n \rightarrow 0$ and so $\mathfrak{b}[x_n] \rightarrow 0$ which proves that $\We(\mathfrak{t}) \subset \R$. Since $A \ge 0$, it follows that $\We(\mathfrak{t}) \subset [0,\infty)$. Then we use Lemma \ref{lem:sectorial} to observe that
\begin{align*}
\min \sige(A(t)) = \min \We(A(t)) &= \min \We(\mf{a}+t \mf{b}) \\
&= \min \re \We((1-it)\mf{t}) = 0 
\end{align*} 
for all $t \in \R$.  
\end{proof}

\begin{remark}
Lemma \ref{lem:relcompact} and Corollary \ref{cor:extendedWeakerMain} together imply that if 
$$\lim_{t \rightarrow t_0^+} \mu(t)/(t-t_0) < 0,$$ 
in the notation of Theorem \ref{thm:Simon}, then $0$ is an eigenvalue of $A \dotplus t_0B$. If zero is not an eigenvalue of $A$, then the converse is also true, as was observed in \cite{Simon77}.  Let $A(\tau)$ denote the type (B) family of self-adjoint operators corresponding to the family of sesquilinear forms $(\mf{a}+t_0 \mf{b}) + \tau \mf{b}$ (here $\tau = t - t_0$).   Suppose that 0 is an eigenvalue of $A(0)$ (here $A(0) = A \dotplus t_0B$), and $x$ is an eigenvector of $A(0)$ corresponding to $0$.  Then $(\mf{a}+t_0 \mf{b})[x] = 0$.  Since $A \ge 0$, $\mf{b}[x] < 0$.  Therefore the self-adjoint operator $B_0$ defined by \eqref{eq:B0} has a minimal eigenvalue $\beta < 0$. If $\mu(\tau)$ is the sole negative eigenvalue of $A(\tau)$ that approaches 0 as $\tau \rightarrow 0^+$, then $\mu(\tau)/\tau \rightarrow \beta$ by Theorem \ref{thm:main}.
\color{black}
\end{remark}

\section{Isolated Eigenvalues} \label{sec:isolated}
If $A(t)$ is a family of self-adjoint operators that depend analytically on the real parameter $t$, then the Kato-Rellich perturbation theory applies to the isolated eigenvalues of $A(t)$ with finite multiplicity. In general, it is not possible to analytically continue an eigenvalue function $\lambda(t)$ after it approaches an element of the essential spectrum, see Example \ref{ex:Narco}. In some circumstances, the Kato-Rellich theory can be adapted to isolated eigenvalues with infinite multiplicity, as the following theorem shows. 

\begin{theorem} \label{thm:isolated}
Suppose that $A(t)$ is a holomorphic family of self-adjoint bounded linear operators on $\mathcal{H}$ with power series expansion 
$$A(t) = A_0 + t A_1 + t^2 A_2 + \ldots$$
defined in a neighborhood of $t = 0$.  Suppose that $0$ is an isolated element of the spectrum of $A_0$.  Let $P$ denote the spectral projection onto the kernel of $A_0$.  If $\mu$ is an element of the discrete spectrum of $P A_1 P$ with multiplicity $k$, then there is a family of $k$ analytic functions $x_j(t) \in \HH$ defined on an open interval $I$ containing $0$ such that the collection $x_j(t)$, $j = 1, \ldots, k$, is a mutually orthogonal family of unit eigenvectors of $A(t)$ with corresponding eigenvalues $\lambda_j(t)$ that satisfy $\lambda_j(0) = 0$ and $\lambda_j'(0) = \mu$ for all $1 \le j \le k$.
\end{theorem}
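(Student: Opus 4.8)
The plan is to peel the problem down, by a Kato-type reduction, to a non-degenerate perturbation problem on the (possibly infinite-dimensional) subspace $\range P$, and then to remove the residual degeneracy by dividing by $t$.

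First I would build the reduced family. Since $0$ is isolated in $\sigma(A_0)$, pick a circle $\Gamma \subset \C$ enclosing $0$ and no other point of $\sigma(A_0)$ and set $P(t) = \frac{1}{2\pi i}\oint_{\Gamma}(z-A(t))^{-1}\,dz$ for $t$ in a complex neighborhood of $0$; this is a bounded-holomorphic family of projections with $P(0)=P$, each $P(t)$ commutes with $A(t)$, and $P(t)$ is orthogonal for real $t$. Let $U(t)$ be Kato's transformation function associated with $P(t)$ (see \cite{Kato}): it is bounded-holomorphic, $U(0)=I$, it is unitary for real $t$, and $U(t)PU(t)^{-1}=P(t)$. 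Because $P(t)$ reduces $A(t)$, the conjugate $U(t)^{-1}A(t)U(t)$ commutes with $P$, hence restricts to a bounded operator $\hat A(t)$ on $\mathcal{K}:=\range P$; the family $\hat A(t)$ is bounded-holomorphic, self-adjoint for real $t$, and $\hat A(0)=(PA_0P)|_{\mathcal{K}}=0$ because $A_0P=0$. Expanding $U(t)=I+tU_1+\cdots$ and using $PA_0=A_0P=0$ yields the first-order identity $\hat A'(0)=(PA_1P)|_{\mathcal{K}}=:T$; this bookkeeping (and the fact that the reduction goes through with $\mathcal{K}$ possibly infinite-dimensional) is the most technical part of the argument.

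Next comes the key trick. Since $\hat A(0)=0$, the Taylor series of $\hat A(t)$ has no constant term, so $\hat A(t)/t$ extends to a bounded-holomorphic family $B(t)$ on $\mathcal{K}$ with $B(0)=\hat A'(0)=T$ and $B(t)$ self-adjoint for real $t$. By hypothesis $\mu$ is an isolated eigenvalue of $T$ of finite multiplicity $k$, so the classical Kato-Rellich theory for bounded self-adjoint holomorphic families applies to $B(t)$ at $\mu$: there are $k$ mutually orthogonal analytic unit eigenvectors $z_j(t)\in\mathcal{K}$ of $B(t)$, with analytic eigenvalues $\nu_j(t)$, $\nu_j(0)=\mu$, defined on a real interval $I$ about $0$. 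For $t\neq 0$ we then have $\hat A(t)z_j(t)=tB(t)z_j(t)=t\nu_j(t)z_j(t)$, while $\hat A(0)z_j(0)=0=0\cdot z_j(0)$, so $z_j(t)$ is an eigenvector of $\hat A(t)$ with eigenvalue $\lambda_j(t):=t\nu_j(t)$ for every $t\in I$.

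Finally I would transport back to $\HH$. Put $x_j(t):=U(t)z_j(t)$. Then $x_j(t)\in\range P(t)$, which reduces $A(t)$, so $U(t)^{-1}A(t)U(t)z_j(t)\in\mathcal{K}$ and therefore equals $\hat A(t)z_j(t)=\lambda_j(t)z_j(t)$; applying $U(t)$ gives $A(t)x_j(t)=\lambda_j(t)x_j(t)$. The $x_j(t)$ are analytic, they are orthonormal because $U(t)$ is unitary and the $z_j(t)$ are, and $\lambda_j(0)=0$, $\lambda_j'(0)=\nu_j(0)=\mu$, which is exactly the conclusion. The genuine content, and the main obstacle, is conceptual rather than computational: $\hat A(t)$ itself is not amenable to Kato-Rellich theory at $t=0$, since its eigenvalue $0$ has multiplicity $\dim\mathcal{K}$ (possibly infinite); dividing by $t$ replaces it by a family whose value at $0$ is $T$, and then the only finiteness the argument needs is that $\mu$ is an isolated eigenvalue of $T$ of finite multiplicity, with nothing in the reduction requiring $\dim\mathcal{K}<\infty$.
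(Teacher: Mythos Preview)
Your proof is correct and uses the same key idea as the paper: form the holomorphic spectral projection $P(t)$, observe that the compressed family vanishes at $t=0$, divide by $t$ to obtain a holomorphic self-adjoint family with value $PA_1P$ at $0$, and apply Kato--Rellich to that family at the isolated finite-multiplicity eigenvalue $\mu$. The only difference is packaging: you conjugate by Kato's transformation function $U(t)$ to work on the fixed subspace $\mathcal{K}=\range P$, whereas the paper works directly with $B(t)=t^{-1}P(t)A(t)P(t)$ on all of $\HH$ and uses that its eigenvectors for eigenvalues near $\mu$ automatically lie in $\range P(t)$ --- the paper's route is a bit shorter, but yours makes the reduction to $\mathcal{K}$ explicit and avoids any worry about the extraneous kernel coming from $(\range P)^\perp$.
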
			
\begin{proof}
Since $0$ is an isolated element of the spectrum of $A(0)$, we can construct an analytic spectral projection function $P(t)$ such that $P(0) = P$ \cite[Theorem VII.1.7]{Kato}.  We will show that the expression $B(t) = t^{-1} P(t) A(t) P(t)$ has a power series expansion that converges in a neighborhood of $t=0$.  The spectral projection $P(t)$ has a power series of the form
$P(t) = P_0 + P_1 t + P_2 t^2 + \ldots$. Expanding the power series for $P(t) A(t) P(t)$ gives:
$$P_0 A_0 P_0+ t(P_0 A_1 P_0 + P_1 A_0 P_0 + P_0 A_0 P_1) + o(t^2)$$
Observe that $P_0 A_0 = A_0 P_0 = 0$, so the expression above simplifies to:
$$t (P_0 A_1 P_0) + o(t^2).$$
This means that $B(t) = t^{-1} P(t) A(t) P(t)$ is analytic in a neighborhood of $t = 0$.  Also, $\mu$ is an isolated eigenvalue of $B(0)$ with multiplicity $k$.  Therefore the Kato-Rellich perturbation theory applies, so there is a family of mutually orthogonal unit eigenvectors $x_1(t), \ldots, x_k(t)$ of $B(t)$ that are analytic functions of $t$ in an interval of $0$, and such that $B(0) x_j(0) = \mu x_j(0)$ for all $j$.  Each $x_j(t)$ is an eigenvector of $A(t)$ with corresponding eigenvalue $\lambda_j(t) = \inner{A(t)x_j(t), x_j(t)} = t \inner{B(t) x_j(t), x_j(t)}$.  Let $\mu_j(t)$ denote $\inner{B(t) x_j(t), x_j(t)}$ and observe that $\mu_j(t)$ is analytic in $t$ for all $1 \le j \le k$.  Then $\lambda_j'(t) = \mu_j(t) + t \mu_j'(t)$ and $\lambda_j'(0) = \mu_j(0) = \mu$ for all $1 \le j \le k$.
\end{proof}

\section{Examples} \label{sec:ex}


\begin{example} \label{ex:Volterra}
Let $\mathcal{H} = L^2(0,1)$.  The Volterra operator $V: \mathcal{H} \rightarrow \mathcal{H}$ is 
$$(Vf)(t) := \int_0^t f(s) \, ds.$$
It is well known that the Volterra operator is a compact linear operator.
The adjoint of $V$ is $(V^*f)(t) = \int_t^1 f(s) \, ds$ and therefore the real part of $V$ is a rank one self-adjoint operator with non-zero eigenvalue equal to 1/2 and the corresponding eigenspace consists of all constant functions. Let $V(\theta)$ denote the real part of $e^{-i \theta} V$ and note that  
$$V(\theta) = \tfrac{1}{2} ( e^{-i \theta} V + e^{i \theta} V^* ) = \cos \theta \re V + \sin \theta \im V.$$
Suppose that $f$ is a unit eigenvector of $V(\theta)$ for $\theta \in \R \backslash \{0\}$.  Then $V(\theta) f = \lambda f$ for some $\lambda \in \R$ and 
$$\lambda f'(x) = \tfrac{1}{2} e^{-i \theta} f(x) - \tfrac{1}{2} e^{i \theta} f(x) = -i\sin \theta f(x).$$ 
This means that the eigenvectors of $V(\theta)$ have the form $f(x) = e^{-i x \sin \theta/\lambda}$ when $\lambda \ne 0$.  By substituting into the expression for $V(\theta)$ we see that the eigenvalues and corresponding unit eigenvectors of $V(\theta)$ are 
$$\lambda_n = \frac{\sin \theta}{ 2\theta + 2n \pi}, ~ f_n(t) = e^{-i t (2 \theta + 2n \pi)} \text{ where } n \in \Z.$$
In particular, $\lambda = 0$ is an isolated eigenvalue of $V(0)$ with infinite multiplicity. The power series expansion for $V(\theta)$ centered at $\theta = 0$ is
$$V(\theta) = \re{V} + \theta \im{V} + o(\theta^2).$$
As predicted by Theorem \ref{thm:isolated}, the eigenvectors of $V(\theta)$ are analytic functions of $\theta$ and can be continued analytically even when $\theta = 0$. The corresponding eigenvalues at $\theta = 0$ are the eigenvalues of $P (\im V) P$ where $P$ is the orthogonal projection onto the kernel of $\re V$. Theorems \ref{thm:weakerMain} and \ref{thm:main} also apply to $V(\theta)$, but they do not show that the eigenvalues and eigenvectors of $V(\theta)$ can be analytically continued through $\theta = 0$.

\end{example}

\begin{example} \label{ex:Narco}
Let $H$ be the operator on $\ell_2(\N)$ defined by 
$$H(x)_k = \begin{cases}
0 & \text{ if } k = 1.  \\
e^{-k} x_k & \text{ otherwise.}  
\end{cases}
$$
We also choose two elements $a, b \in \ell_2(\N)$ with nonnegative entries such that 
$$a_k^2 = \begin{cases}
1 & \text{ if } k = 1, \\
\frac{3}{4^n} & \text{ if } k = (4n)^2 \text{ for } n \in \N, \\
0 & \text{otherwise,}
\end{cases}
$$
and 
$$b_k^2 = \begin{cases}
1 & \text{ if } k = 1, \\
\frac{1}{2} & \text{ if } k = 2, \\
\frac{3}{2 (4^n)} & \text{ if } k = (4n+2)^2 \text{ for } n \in \N, \\
0 & \text{otherwise.}
\end{cases}
$$
Observe that $\|a_k \| = \|b_k \| = \sqrt{2}$. In particular, $\sum_{k = 2}^\infty a_k^2 = \sum_{k  = 2}^\infty b_k^2 = 1$.  We also note that 
$$\sum_{k = 2}^{(4n+1)^2} a_k^2 = \sum_{k = 2}^{(4n+3)^2} a_k^2 = 1 - \frac{1}{4^{n}},$$
while 
$$\sum_{k = 2}^{(4n+1)^2} b_k^2 = 1 - \frac{2}{4^{n}} \text{ and } \sum_{k = 2}^{(4n+3)^2} b_k^2 =  1 - \frac{1}{2(4^{n})}. $$


Let $K_a$ be the rank one operator $K_a(x) = \inner{x,a} a$ and likewise let $K_b(x) = \inner{x,b}b$.  Since $H \ge 0$, and $\inner{H-tK_a e_1,e_1} = -t$ where $e_1 = (1,0,0,\ldots) \in \ell_2(\N)$, it follows from that $H-tK_a$ must have at least one negative eigenvalue.  However, since $K_a$ is rank one, the Courant-Fischer-Weyl max-min principal implies that $H-tK_a$ has only one negative eigenvalue, counting multiplicity.  The same argument applies to $H-tK_b$. Let $\lambda_a(t)$ denote the minimal eigenvalue of $H-t K_a$ and let $\lambda_b(t)$ denote the minimal eigenvalue of $H-t K_b$.
Both $\lambda_a(t)$ and $\lambda_b(t)$ are analytic functions of $t$ when $0 < t < \infty$ by the Kato-Rellich theory. We will show that the minimal eigenvalues of $H-t K_a$ and $H-t K_b$ cross infinitely many times as $t \rightarrow 0^+$, and therefore at least one of the two eigenvalue functions $\lambda_a(t)$ or $\lambda_b(t)$ cannot be analytically continued beyond $t=0$.

Fix $0< t< 1$ and suppose that 
$(H - tK_a) x = \lambda x$ for $x \in \ell_2$ with $\|x\|=1$ and $\lambda < 0$.  Then 
\begin{align*} 
H x - t K_a x &= \lambda x \\
( H - \lambda I) x &=  t \inner{x, a} a.
\end{align*}
Since $H \ge 0$ and $\lambda < 0$, $H - \lambda I$ is invertible, so 
$$x = t \inner{x, a}  (H - \lambda I)^{-1} a$$
Taking the inner-product of both sides of the expression above with $a$ and solving for $t$, we get 
$$\frac{1}{\inner{(H - \lambda I)^{-1} a,a}} = t $$ 
The expression $\inner{(H - \lambda I)^{-1} a,a}$ can be expanded as:
$$ \frac{1}{-\lambda} a_1^2 + \sum_{k = 2}^{\infty} \frac{1}{e^{-k} - \lambda} a_k^2.$$
It is apparent that this expression is a strictly monotone function of $\lambda \in (-\infty,0)$ and therefore so is $t$.  For convenience, let $f_a(\lambda) = \inner{(H - \lambda I)^{-1} a,a}$, and likewise, let $f_b(\lambda) = \inner{(H - \lambda I)^{-1} b,b}$.  The minimum eigenvalues of $H - t K_a$ and $H - tK_b$ are given by $\lambda_a(t) = f_a^{-1}(1/t)$ and $\lambda_b(t) = f_b^{-1}(1/t)$ respectively. We will show that the functions $f_a(\lambda)$ and $f_b(\lambda)$ cross infinitely many times as $\lambda$ approaches $0$ from below.  This in turn will show that the functions $\lambda_a(t)$ and $\lambda_b(t)$ cross infinitely many times as $t \rightarrow 0^+$.  

If $k \le (m-1)^2$, then  
$$\frac{1}{1+e^{m^2 - k}} \le  \frac{1}{1+e^{2m - 1}} < \frac{1}{2^m},  $$
and similarly if $k \ge (m+1)^2$, then 
$$ \frac{1}{1+e^{m^2 - k}} \ge   \frac{1}{1+e^{-2m - 1}}  > \frac{2^m-1}{2^m}.$$
If $m = 4n+1$ and $\lambda = -e^{-m^2}$, then 
\begin{align*}
f_a(\lambda) &= e^{m^2} \left(1 + \sum_{k=2}^\infty \left( \frac{1}{1+e^{m^2-k}} \right) a_k^2 \right) \\
&<  e^{m^2} \left(1 + \sum_{k = 2}^{m^2}  \left( \frac{1}{1+e^{m^2-k}} \right) a_k^2 + \sum_{k = m^2}^{\infty} a_k^2 \right)  \\
&<  e^{m^2} \left(1 + \sum_{k = 2}^{m^2} \left( \frac{1}{2^{m}} \right) a_k^2 + \sum_{k = m^2}^{\infty} a_k^2 \right)  \\
&= e^{m^2} \left(1 + \left( \frac{1}{2^{m}} \right)  \left(1-\frac{1}{4^n} \right) + \frac{1}{4^n} \right) \\
&= e^{m^2} \left(1 + \frac{1}{2^m} + \left( 1 - \frac{1}{2^m} \right) \frac{1}{4^n} \right),
\end{align*}


while
\begin{align*}
f_b(\lambda) &=  e^{m^2} \left(1 + \sum_{k = 2}^{\infty} \left( \frac{1}{1+e^{m^2-k}} \right) b_k^2 \right)   \\
 &>  e^{m^2} \left(1 + \sum_{k = m^2}^{\infty} \left( \frac{1}{1+e^{m^2-k}} \right) b_k^2 \right)   \\
&>  e^{m^2} \left(1 + \sum_{k = m^2}^{\infty} \left( \frac{2^m-1}{2^m} \right) b_k^2  \right)  \\
&= e^{m^2} \left(1 + \left( \frac{2^m-1}{2^m} \right)  \left(\frac{2}{4^n} \right) \right) \\
&= e^{m^2} \left(1 + \left( 1 - \frac{1}{2^m} \right) \frac{2}{4^n} \right).
\end{align*}

By inspection, it is clear from the above inequalities that $f_a(\lambda) < f_b(\lambda)$ when $\lambda = -e^{-(4n+1)^2}$. Essentially the same argument shows that $f_b(\lambda) < f_a(\lambda)$ when $\lambda = -e^{-(4n+3)^2}$. Therefore the functions $f_a(\lambda)$ and $f_b(\lambda)$ cross infinitely many times as $\lambda \rightarrow 0^-$, which proves that at least one of the eigenvalue functions $\lambda_a(t)$ or $\lambda_b(t)$ cannot be analytically continued past beyond $t=0$.  
\end{example}

%
%

\bibliographystyle{plain}
\bibliography{perturb}
\end{document}